\documentclass[11pt]{article}
\usepackage{amsmath, amssymb, amsfonts, amsthm, graphicx}

\newtheorem{theorem}{Theorem}[section]
\newtheorem{lemma}[theorem]{Lemma}

\newtheorem{cor}[theorem]{Corollary}
\newtheorem{claim}[theorem]{Claim}

\newtheorem{thm}[theorem]{Theorem}
\newtheorem{preexample}[theorem]{{\bf Example}}
\newenvironment{example}{\begin{preexample}\rm{\hspace{-0.5 em}{\bf}}}{\end{preexample}}

\title{Interval minors of complete bipartite graphs}

\author{
  Bojan Mohar\thanks{Supported in part by an NSERC Discovery Grant (Canada),
   by the Canada Research Chair program, and by the
    Research Grant P1--0297 of ARRS (Slovenia).}~\thanks{On leave from:
    IMFM \& FMF, Department of Mathematics, University of Ljubljana, Ljubljana,
    Slovenia.}\\[1mm]
  Department of Mathematics\\
  Simon Fraser University\\
  Burnaby, BC, Canada\\
  {\tt mohar@sfu.ca}
\and
  Arash Rafiey\\[1mm]
  Department of Mathematics\\
  Simon Fraser University\\
  Burnaby, BC, Canada\\
  {\tt arashr@sfu.ca}
\and
  Behruz Tayfeh-Rezaie\\[1mm]
  School of Mathematics\\
  Institute for Research in Fundamental Sciences (IPM)\\
  P.O. Box 19395-5746, Tehran, Iran\\
  {\tt tayfeh-r@ipm.ir}
\and
 Hehui Wu \\[1mm]
 Department of Mathematics\\
  Simon Fraser University\\
  Burnaby, BC, Canada\\
  {\tt noshellwhh@gmail.com}
}

\date{}

\begin{document}

\maketitle

\begin{abstract}
Interval minors of bipartite graphs were recently introduced by Jacob Fox in the study of Stanley-Wilf limits.
We investigate the maximum number of edges in $K_{r,s}$-interval minor free bipartite graphs. We determine exact values when $r=2$ and describe the extremal graphs. For $r=3$, lower and upper bounds are given and the structure of $K_{3,s}$-interval minor free graphs is studied.
\end{abstract}

\vspace{5mm}
\noindent {\bf Keywords:} interval minor, complete bipartite graph, forbidden configuration, forbidden pattern. \\[.1cm]
\noindent {\bf Mathematics Subject Classification (2010):} 05C35, 05C83, 05B20.

\section{Introduction}

All graphs in this paper are simple, i.e.\ multiple edges and loops
are not allowed.
By an \emph{ordered} bipartite graph $(G; A,B)$, we mean a bipartite graph $G$ with independent sets $A$ and $B$ which partition the vertex set of $G$ and each of $A$ and $B$ has a linear ordering on its elements. We call two vertices $u$ and $v$ \emph{consecutive} in the linear order $<$ on $A$ or $B$ if $u<v$ and there is no vertex $w$ such that $u<w<v$.
By \emph{identifying} two consecutive vertices $u$ and $v$ to a single vertex $w$, we obtain a new ordered bipartite graph such that the neighbourhood of $w$ is the union of the neighbourhoods of $u$ and $v$ in $G$. All bipartite graphs in this paper are ordered and so, for simplicity, we usually say bipartite graph $G$ instead of ordered bipartite graph $(G; A,B)$.
Two ordered bipartite graphs $G$ and $G'$ are \emph{isomorphic} if there is a graph isomorphism $G\to G'$ preserving both parts, possibly exchanging them, and preserving both linear orders. They are \emph{equivalent} if $G'$ can be obtained from $G$ by reversing the orders in one or both parts of $G$ and possibly exchange the two parts.

If $G$ and $H$ are ordered bipartite graphs, then $H$ is called an \emph{interval minor} of $G$ if a graph isomorphic to $H$ can be obtained from $G$ by repeatedly applying the following operations:
\begin{enumerate}
 \item[(i)] deleting an edge;
 \item[(ii)] identifying two consecutive vertices.
\end{enumerate}
If $H$ is not an interval minor of $G$, we say that $G$ \emph{avoids} $H$ as an interval minor or that $G$ is \emph{$H$-interval minor free}.
Let $ex(p,q,H)$ denote the maximum number of edges in a bipartite graph with parts of sizes $p$ and $q$ avoiding $H$ as an interval minor.

In classical Tur\'an extremal graph theory, one asks about the
maximum number of edges of a graph of order $n$ which has no
subgraph isomorphic to a given graph. Originated from problems in
computational and combinatorial geometry, the authors in
\cite{BIE,FUR,FURHAJ} considered Tur\'an type problems for
matrices which can be seen as ordered bipartite graphs. In the
ordered version of Tur\'an theory, the question is: what is the
maximum number edges of an ordered bipartite graph with parts of
size $p$ and $q$ with no subgraph isomorphic to a given ordered
bipartite graph? More results on this problem and its variations
are given in \cite{ALBERT,BRASS,CLA,KLA,PACH}. As another
variation, interval minors were recently introduced by Fox in
\cite{FOX} in the study of Stanley-Wilf limits. He gave
exponential upper and lower bounds for $ex(n,n,K_{\ell,\ell})$. In
this paper, we are interested in the case when $H$ is a complete
bipartite graph. We determine the value of $ex(p,q,K_{2,\ell})$
and find bounds on $ex(p,q,K_{3,\ell})$. We note that our
definition of interval minors for ordered bipartite graphs is
slightly different from Fox's definition for matrices, since we
allow exchanging parts of the bipartition, so for us a matrix and
its transpose are the same. Of course, when the matrix of $H$ is
symmetric, the two definitions coincide.

\section{$K_{2,\ell}$ as interval minor}

For simplicity, we denote $ex(p,q,K_{2,\ell})$ by $m(p,q, \ell)$. In this section we find the exact value of this quantity.
Let $(G;A,B)$ be an ordered bipartite graph where $A$ has ordering $a_1 < a_2 < \dots <a_p$ and $B$ has ordering $b_1 < b_2 <\dots <b_q$.
The vertices $a_1$ and $b_1$ are called \emph{bottom} vertices whereas $a_p$ and $b_q$ are said to be \emph{top} vertices. The degree of a vertex $v$ is denoted by $d(v)$.

\begin{lemma}\label{frtbd}
For any positive integers $p$ and $q$, we have
$$m(p,q,\ell)\leqslant (\ell-1)(p-1)+q.$$
\end{lemma}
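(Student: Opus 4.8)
The plan is to argue by induction on $p$, using the operation of identifying the two bottom vertices $a_1$ and $a_2$, together with one structural observation: in any ordered bipartite graph avoiding $K_{2,\ell}$ as an interval minor, \emph{any two distinct vertices of $A$ have at most $\ell-1$ common neighbours}. Here I write $N(v)$ for the neighbourhood of a vertex $v$. Granting this, the induction is short: for $p=1$ we have $|E(G)|\le q$, and for $p\ge2$ we let $G'$ be obtained from $G$ by identifying the consecutive vertices $a_1$ and $a_2$. Then $G'$ is an ordered bipartite graph with parts of sizes $p-1$ and $q$; it still avoids $K_{2,\ell}$, since every interval minor of $G'$ is an interval minor of $G$. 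The new vertex of $G'$ has neighbourhood $N(a_1)\cup N(a_2)$, so $|E(G')|=|E(G)|-|N(a_1)\cap N(a_2)|\ge |E(G)|-(\ell-1)$ by the observation. Applying the induction hypothesis to $G'$ then yields $|E(G)|\le |E(G')|+(\ell-1)\le (\ell-1)(p-2)+q+(\ell-1)=(\ell-1)(p-1)+q$.

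To prove the structural observation, I would first unwind the definition of interval minor for $K_{2,\ell}$: it is an interval minor of $(G;A,B)$ exactly when there is an index $\alpha$ with $1\le\alpha\le p-1$ and consecutive nonempty intervals $J_1<J_2<\cdots<J_\ell$ partitioning $B$ so that each $J_k$ meets the neighbourhood of $\{a_1,\dots,a_\alpha\}$ and the neighbourhood of $\{a_{\alpha+1},\dots,a_p\}$ (identify within each side and within each interval, then delete surplus edges). Now suppose $a_i$ and $a_j$ with $i<j$ had common neighbours $b_{j_1}<b_{j_2}<\cdots<b_{j_\ell}$. Take $\alpha=i$, so that $a_i$ lies on the left side and $a_j$ on the right, and cut $B$ into the $\ell$ intervals $J_1=\{b_1,\dots,b_{j_1}\}$, $J_k=\{b_{j_{k-1}+1},\dots,b_{j_k}\}$ for $2\le k\le\ell-1$, and $J_\ell=\{b_{j_{\ell-1}+1},\dots,b_q\}$. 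Each $J_k$ is nonempty and contains $b_{j_k}$, a common neighbour of $a_i$ and $a_j$; hence after the identifications we are left with exactly two vertices on the $A$-side and $\ell$ on the $B$-side, all $2\ell$ edges present, i.e.\ $K_{2,\ell}$. This contradicts $G$ being $K_{2,\ell}$-interval minor free, so $|N(a_i)\cap N(a_j)|\le\ell-1$.

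I do not anticipate a real obstacle; the only points needing care are that the intervals $J_k$ in the argument above be genuinely nonempty (automatic, since having $\ell$ common neighbours forces $q\ge\ell$) and that the sequence of identifications and deletions produce a graph truly isomorphic to $K_{2,\ell}$ rather than something larger (immediate from the vertex count after identification). As an alternative to the induction, one can phrase the whole proof in one shot by successively identifying $a_1,a_2,\dots,a_p$ down to a single vertex: each of the $p-1$ identifications is performed in a graph that is still an interval minor of $G$, hence still $K_{2,\ell}$-free, so by the observation it destroys at most $\ell-1$ edges; the final graph has at most $q$ edges, giving $|E(G)|\le q+(p-1)(\ell-1)$.
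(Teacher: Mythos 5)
Your proof is correct, but it is packaged differently from the paper's. The paper argues by direct double counting: for each cut position $i$ it considers the set $A_i$ of vertices $b_j$ having neighbours on both sides of the cut, shows $|A_i|\leqslant \ell-1$ (by the same ``identify the two sides of $A$ and suitable intervals of $B$'' construction you use), notes that each $b_j$ lies in at least $d(b_j)-1$ of the sets $A_i$, and sums. You instead prove the local fact that any two vertices of $A$ have at most $\ell-1$ common neighbours and then induct on $p$ (equivalently, telescope) by identifying the two bottom vertices of $A$: each identification is performed in a graph that is still an interval minor of $G$, hence still $K_{2,\ell}$-free, and destroys at most $\ell-1$ edges, and the base case contributes at most $q$ edges. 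All the steps check out: the intervals $J_1<\dots<J_\ell$ you build are nonempty, the identified graph is exactly $K_{2,\ell}$, $|E(G')|=|E(G)|-|N(a_1)\cap N(a_2)|$, and $K_{2,\ell}$-freeness is inherited by interval minors. Your version is arguably more elementary and self-contained, since it only needs the common-neighbour bound for one pair at a time; what the paper's counting buys is the machinery of the sets $A_i$, which it later refines (to $A_i'$, $D(b_j)$, $d'(b_j)$) to handle the much harder balanced case in Theorem \ref{rs2} and adapts to $K_{3,\ell}$ in Lemma \ref{frtbd3}, so the authors' choice of proof is setting up tools reused later rather than aiming for the shortest argument for this lemma alone.
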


\begin{proof}
Let $(G;A,B)$ be a bipartite graph. Suppose that $A$ has ordering $a_1 < a_2 < \dots < a_p$ and $B$ has ordering $b_1 < b_2 < \dots < b_q$.
For $1\leqslant i\leqslant p-1$, let
$$A_i=\{b_j \mid \exists \ i_1\leqslant i < i_2 \text{ such that } a_{i_1}b_j,a_{i_2}b_j\in E(G)\}.$$
Since $G$ is $K_{2,\ell}$-interval minor free, $|A_i|\leqslant \ell-1$.
 Each $b_j\in B$ appears in at least
$d(b_j)-1$ of sets $A_i$, $1\leqslant i\leqslant p-1$. It follows that
\begin{equation*}
\sum_{i=1}^q (d(b_j)-1) \leqslant \sum_{i=1}^{p-1} |A_i| \le (\ell-1)(p-1).
\end{equation*}
This proves that $|E(G)|\leqslant(\ell-1)(p-1)+q$.
\end{proof}

If $(G;A,B)$ and $(G'; A',B')$ are disjoint ordered bipartite graphs and the bottom vertices $x,y$ of $G$ are adjacent and the top vertices $x',y'$ of $G'$ are adjacent, then we denote by $G\oplus G'$ the ordered bipartite graph obtained from $(G\cup G'; A\cup A', B\cup B')$ by identifying $x$ with $x'$ and $y$ with $y'$, where the linear orders of $A\cup A'$ and $B\cup B'$ are such that the vertices of $G'$ precede those of $G$. The graph $G\oplus G'$ is called the \emph{concatenation} of $G$ and $G'$.

In the description of $K_{2,\ell}$-interval minor free graphs below, we shall use the following simple observation, whose proof is left to the reader.
Let $G$ and $G'$ be vertex disjoint $K_{r,s}$-interval minor free bipartite graphs with $r\ge2$ and $s\ge2$ such that the bottom vertices in $G$ are adjacent and the top vertices in $G'$ are adjacent. Then $G\oplus G'$ is $K_{r,s}$-interval minor free.

\begin{example}\label{ex1}
We introduce a family of $K_{2,\ell}$-interval minor free bipartite graphs which would turn out to be extremal.
Let $\ell\geqslant 3$ and let $p$ and $q$ be positive integers and let $r = \lfloor (p-1)/(\ell-2)\rfloor$ and $s = \lfloor (q-1)/(\ell-2)\rfloor$. We can write $p = (\ell-2)r+e$ and $q = (\ell-2)s+f$, where $1 \leqslant e \leqslant \ell-2$, $1 \leqslant f \leqslant \ell-2$. Suppose now that $r<s$.
Let $H_0$ be $K_{e,\ell-1}$ and let $H_i$ be a copy of $K_{\ell-1,\ell-1}$ for $1 \leqslant i \leqslant r$.
The concatenation $H=H_0\oplus H_1\oplus \cdots \oplus H_r$ is $K_{2,\ell}$-interval minor free by the above observation.
It has parts of sizes $p$ and $q'=(\ell-2)(r+1)+1$. It also has $r\ell(\ell-2)+e(\ell-1)$ edges.
Finally, let $H^+ = K_{1,q-q'+1}$. The graph $\mathcal{H}_{p,q}(\ell) = H^+\oplus H$ has parts of sizes $p,q$ and has $(\ell-1)(p-1)+q$ edges.
An example is depicted in Figure \ref{ex2f}(b), where the identified top and bottom vertices used in concatenations are shown as square vertices.
\end{example}

\begin{figure}[htb]
\centering
\includegraphics[width=12.8cm]{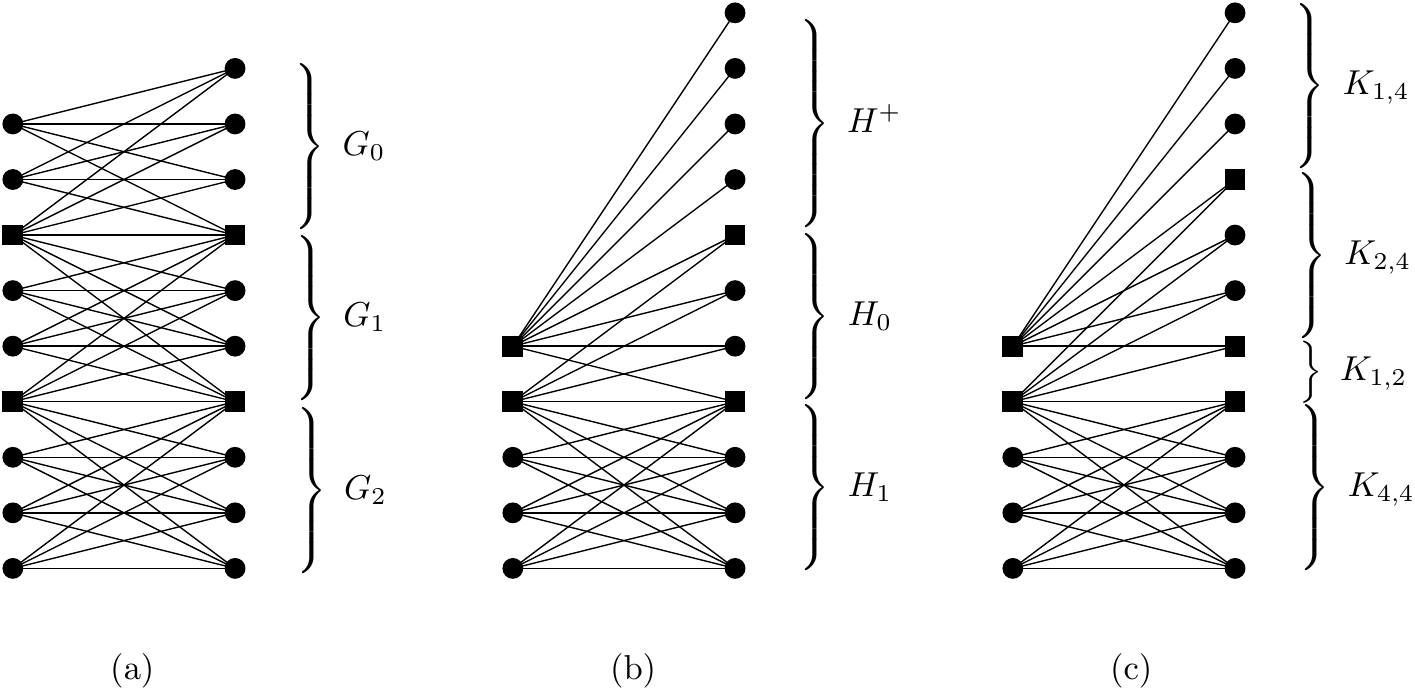}
\caption{(a) $\mathcal{G}_{9,10}(5)$, (b) $\mathcal{H}_{5,11}(5)$,
         (c) $K_{1,4} \oplus K_{2,4} \oplus K_{1,2} \oplus K_{4,4}$}\label{ex2f}
\end{figure}

By Lemma \ref{frtbd} and Example \ref{ex1}, the following is obvious.

\begin{thm}\label{thm:r<s}
Let $\ell\geqslant 3$, $p = (\ell-2)r+e$ and $q = (\ell-2)s+f$, where $1 \leqslant e \leqslant \ell-2$, $1 \leqslant f \leqslant \ell-2$. If\/ $r<s$, then
$$m(p,q,\ell)=(\ell-1)(p-1)+q.$$
\end{thm}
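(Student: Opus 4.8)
The plan is simply to match the upper bound of Lemma~\ref{frtbd} with the construction of Example~\ref{ex1}. For the upper bound there is nothing to do: Lemma~\ref{frtbd} already gives $m(p,q,\ell)\le(\ell-1)(p-1)+q$ for all positive integers $p,q$, with no hypothesis on $r$ and $s$. So the entire content lies in the lower bound $m(p,q,\ell)\ge(\ell-1)(p-1)+q$, for which it suffices to verify that the graph $\mathcal{H}_{p,q}(\ell)=H^{+}\oplus H$ of Example~\ref{ex1} is a legitimate $K_{2,\ell}$-interval minor free bipartite graph with parts of sizes $p$ and $q$ and exactly $(\ell-1)(p-1)+q$ edges.

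First I would check that the construction makes sense precisely under the hypothesis $r<s$. The inner graph $H=H_0\oplus H_1\oplus\cdots\oplus H_r$ has its $B$-part of size $q'=(\ell-2)(r+1)+1$, and $r<s$ together with $f\ge1$ gives $q'=(\ell-2)(r+1)+1\le(\ell-2)s+1\le(\ell-2)s+f=q$; hence $H^{+}=K_{1,q-q'+1}$ is well defined (it degenerates to a single edge when $q'=q$, still a valid concatenand). Next I would run the bookkeeping for concatenation: each $G\oplus G'$ identifies one adjacent pair of bottom vertices of $G$ with one adjacent pair of top vertices of $G'$, so the $A$-part and the $B$-part each lose exactly one vertex and $|E(G\oplus G')|=|E(G)|+|E(G')|-1$. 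Applying this to $H_0=K_{e,\ell-1}$ and the $r$ copies $H_i=K_{\ell-1,\ell-1}$ gives parts of sizes $p$ and $q'$ and $e(\ell-1)+r\ell(\ell-2)$ edges for $H$; the final concatenation with $H^{+}$ then yields parts of sizes $p$ and $q$ and, after substituting $p=(\ell-2)r+e$, an edge count of $(\ell-1)(p-1)+q$.

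The one genuinely structural point is that $\mathcal{H}_{p,q}(\ell)$ avoids $K_{2,\ell}$ as an interval minor. Here I would invoke the observation stated immediately before Example~\ref{ex1}: the concatenation of two vertex-disjoint $K_{r,s}$-interval minor free graphs (with $r,s\ge2$) whose relevant bottom/top vertices are adjacent is again $K_{r,s}$-interval minor free. Each building block $K_{e,\ell-1}$, $K_{\ell-1,\ell-1}$, and $K_{1,q-q'+1}$ trivially avoids $K_{2,\ell}$, since identifications only decrease the number of vertices on each side and none of these blocks has a side of size $\ge\ell$ together with a side of size $\ge2$; and the hypotheses of the observation hold at each step because in a complete bipartite block all bottom vertices and all top vertices are mutually adjacent. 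Repeated application then shows $\mathcal{H}_{p,q}(\ell)$ is $K_{2,\ell}$-interval minor free, and combining with Lemma~\ref{frtbd} gives $m(p,q,\ell)=(\ell-1)(p-1)+q$. I anticipate no real obstacle: the only mild care needed is keeping the $-1$ per concatenation consistent in both the vertex counts and the edge count, which is routine, and the role of the hypothesis $r<s$ is merely to guarantee $q'\le q$ so that $H^{+}$ exists.
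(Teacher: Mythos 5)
Your proposal is correct and follows exactly the paper's route: the theorem is stated there as an immediate consequence of Lemma~\ref{frtbd} (upper bound) and the construction $\mathcal{H}_{p,q}(\ell)$ of Example~\ref{ex1} (lower bound), and your verification of the vertex/edge bookkeeping, of $q'\le q$ via $r<s$, and of $K_{2,\ell}$-avoidance through the concatenation observation is just the detailed check the paper leaves to the reader.
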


Extremal graphs for excluded $K_{2,\ell}$ given in Example \ref{ex1} are of the form of a concatenation of $r$ copies of $K_{\ell-1,\ell-1}$ together with $K_{e,\ell-1}$ and $K_{1,t}$ where $t=q-(\ell-2)(r+1)$. Note that the latter graph itself is a concatenation of copies of $K_{1,2}$ and that the constituents concatenated in another order than given in the example, are also extremal graphs. For an example, consider the graph in Figure \ref{ex2f}(c), which is also extremal for $(p,q,\ell)=(5,11,5)$.
Rearranging the order of concatenations is not the only way to obtain examples of extremal graphs. What one can do is also using the following operation. Delete a vertex in $B$ of degree 1, replace it by a degree-1 vertex $x$ adjacent to any vertex $a_i\in A$ which is adjacent to two consecutive vertices $b_j$ and $b_{j+1}$, and put $x$ between $b_j$ and $b_{j+1}$ in the linear order of $B$. This gives other extremal examples that cannot always be written as concatenations of complete bipartite graphs.

And there is another operation that gives somewhat different extremal examples. Suppose that $G$ is an extremal graph for $(p,q,\ell)$ with $r<s$ as above. If $A$ contains a vertex $a_i$ of degree $\ell-1$ (by Theorem \ref{thm:r<s}, degree cannot be smaller since the deletion of that vertex would contradict the theorem), then we can delete $a_i$ and obtain an extremal graph for $(p-1,q,\ell)$. The deletion of vertices of degrees $\ell-1$ can be repeated. Or we can delete any set of $k$ vertices from $A$ if they are incident to precisely $k(\ell-1)$ edges.

We now proceed with the much more difficult case, in which we have $\lfloor (p-1)/(\ell-2)\rfloor = \lfloor (q-1)/(\ell-2)\rfloor$, i.e.\ $r=s$.

\begin{example} \label{ex2}
Let $\ell\geqslant 3$, $p = (\ell-2)r+e$ and $q = (\ell-2)r+f$, where $1 \leqslant e \leqslant \ell-2$ and $1 \leqslant f \leqslant \ell-2$. Similarly as in Example \ref{ex1}, let $G_0$ be $K_{e,f}$ and let $G_i$ be a copy of $K_{\ell-1,\ell-1}$ for $1 \leqslant i \leqslant r$. Let $\mathcal{G}_{p,q}(\ell)$ be the concatenation $G_0\oplus G_1\oplus \cdots \oplus G_r$.
This graph is $K_{2,\ell}$-interval minor free.
It has parts of sizes $p,q$ and has $r\ell(\ell-2)+ef$ edges. An example is illustrated in Figure \ref{ex2f}(a).
\end{example}

\begin{thm}\label{rs2}
Let $\ell\geqslant 3$,
$p= (\ell-2)r+e$ and $q= (\ell-2)r+f$, where $1 \leqslant e \leqslant \ell-2$ and $1 \leqslant f \leqslant \ell-2$. Then
$$m(p,q,\ell)=r\ell(\ell-2)+ef.$$
\end{thm}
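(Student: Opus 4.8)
To prove Theorem~\ref{rs2}, the lower bound $m(p,q,\ell)\ge r\ell(\ell-2)+ef$ is immediate from the graph $\mathcal{G}_{p,q}(\ell)$ of Example~\ref{ex2}, so the plan is to establish the matching upper bound by induction on $p+q$, treating all valid triples $(r,e,f)$ simultaneously. Three situations need no induction. If $r=0$ then $p=e\le\ell-2$ and $q=f\le\ell-2$, and any bipartite graph with such parts has at most $ef$ edges. If $e=1$, then $(\ell-1)(p-1)+q=(\ell-1)(\ell-2)r+(\ell-2)r+f=r\ell(\ell-2)+f=r\ell(\ell-2)+ef$, so Lemma~\ref{frtbd} already gives the bound; and if $f=1$, the same bound follows from Lemma~\ref{frtbd} applied with the two parts exchanged (legitimate since our interval-minor relation is symmetric under swapping $A$ and $B$), giving $|E(G)|\le(\ell-1)(q-1)+p=r\ell(\ell-2)+ef$.

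So I may assume $r\ge1$ and $2\le e,f\le\ell-2$, and let $(G;A,B)$ be $K_{2,\ell}$-interval minor free with $|A|=p$ and $|B|=q$. If $A$ has a vertex $v$ with $d(v)\le f$, I would delete it: $G-v$ is again $K_{2,\ell}$-interval minor free, with parts of sizes $p-1=(\ell-2)r+(e-1)$ and $q$, hence belongs to the triple $(r,e-1,f)$ with $e-1\ge1$, so induction gives $|E(G-v)|\le r\ell(\ell-2)+(e-1)f$ and therefore $|E(G)|\le r\ell(\ell-2)+(e-1)f+f=r\ell(\ell-2)+ef$. Symmetrically, a vertex of degree at most $e$ in $B$ finishes the proof. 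Hence I may assume in addition that $\delta(A)\ge f+1$ and $\delta(B)\ge e+1$, which is the core of the problem.

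In this core case every vertex of $B$ has degree at least $2$, so every edge at $a_1$ has its $B$-end adjacent to some vertex above $a_1$; thus $N(a_1)\subseteq A_1$ and $d(a_1)\le|A_1|\le\ell-1$, and likewise $d(a_p),d(b_1),d(b_q)\le\ell-1$, and the four corners of the adjacency grid are tightly constrained. The plan now is to peel off a ``corner block.'' I would try to show that for one of the four corners --- say the bottom-left, possibly after replacing $G$ by an equivalent graph --- the $2(\ell-2)$ extreme vertices $\{a_1,\dots,a_{\ell-2}\}\cup\{b_1,\dots,b_{\ell-2}\}$ are incident to at most $\ell(\ell-2)=(\ell-1)^2-1$ edges of $G$. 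Granting this, delete these $2(\ell-2)$ vertices; the remaining graph $G'$ has parts of sizes $(\ell-2)(r-1)+e$ and $(\ell-2)(r-1)+f$, i.e.\ belongs to the triple $(r-1,e,f)$, so by induction $|E(G')|\le(r-1)\ell(\ell-2)+ef$, and hence $|E(G)|\le(r-1)\ell(\ell-2)+ef+\ell(\ell-2)=r\ell(\ell-2)+ef$. For $\mathcal{G}_{p,q}(\ell)$ this deletion is precisely the removal of the bottom $K_{\ell-1,\ell-1}$-block, which carries exactly $\ell(\ell-2)$ such edges, so the estimate is tight at each step.

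The hard part is the corner claim. It cannot hold for a prescribed corner, since in a near-extremal graph the ``missing'' edges may all lie near it; one must choose among the four corners, or more realistically locate a near-complete $K_{\ell-1,\ell-1}$-like block near \emph{some} corner adaptively, and proving that such a corner always exists seems to require genuinely more than the global count of Lemma~\ref{frtbd}: one needs to understand how the sets $A_i$ and their $B$-analogues can interleave near the extreme rows and columns, which $B$-vertices can have widely spread neighborhoods, and how to keep this control through the induction. I also expect the small cases --- in particular $r=1$, where the four corners of the grid overlap and there is no room to peel a full block cleanly --- to need a separate, more hands-on argument, presumably combining a refined version of the $A_i$-count with the observation above that boundary vertices have degree at most $\ell-1$.
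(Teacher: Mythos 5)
Your preliminary reductions are correct as far as they go: the lower bound from Example~\ref{ex2}, the base cases $r=0$ and $e=1$ (via Lemma~\ref{frtbd}) and $f=1$ (via the lemma with the parts exchanged, which is legitimate), and the inductive deletion of a vertex of degree at most $f$ in $A$ (or at most $e$ in $B$), passing to the triple $(r,e-1,f)$, all check out arithmetically. But the argument stops exactly where the theorem becomes nontrivial. The entire upper bound in the remaining ``core'' case rests on the corner claim --- that after these reductions one can always find, near some corner (possibly adaptively), a set of $2(\ell-2)$ extreme vertices meeting at most $\ell(\ell-2)$ edges --- and this claim is never proved; you state explicitly that you do not see how to establish it, and that the case $r=1$ would need a separate argument as well. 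Since everything you do prove is routine, the unproved claim carries essentially all of the content of Theorem~\ref{rs2}, so what you have is a plan rather than a proof. It is not even clear that the claim holds for one of the four literal corners, and the adaptive version you gesture at would need structural control over how neighbourhoods spread near the extreme rows and columns that the proposal does not supply.

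For comparison, the paper proves the upper bound without any induction or peeling. It takes an edge-maximum graph, observes that any two consecutive vertices must have a common neighbour (otherwise identifying them contradicts $m(p,q,\ell)>m(p-1,q,\ell)$), and then refines the count from Lemma~\ref{frtbd}: with $A_i$ as in that lemma one sets $A_i'=A_i\setminus\{b_h\}$ where $h$ is the smallest index with $b_h\in A_i$, so that $|A_i'|\leqslant\ell-2$; one shows, using the common-neighbour property, that each $b_j$ lies in a \emph{consecutive} run of at least $d(b_j)-d'(b_j)-1$ of the sets $A_i'$, where $d'(b_j)$ counts the neighbours of $b_j$ having $b_j$ as their first neighbour; and one then sums $|A_i'|$ only over a set $S$ of $r(\ell-1-e)$ indices chosen by congruence conditions modulo $\ell-2$, so that a consecutive run of length at most $\ell-2$ misses at most $e-1$ indices of $S$. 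This yields $|E(G)|-p-eq+e\leqslant r(\ell-1-e)(\ell-2)$ and hence the bound. If you wish to salvage your approach, you would have to prove the corner claim, and that appears to demand at least as much structural analysis as the paper's counting argument replaces.
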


\begin{proof}
Since the graphs in Example \ref{ex2} attain the stated bound, it suffices to establish the upper bound, $m(p,q,\ell)\leqslant r\ell(\ell-2)+ef$.
Let $(G;A,B)$ be a bipartite graph with parts of sizes $p,q$ and with $m(p,q,\ell)$ edges. Let $A$ have ordering $a_1 < a_2 < \dots <a_p$ and $B$ have ordering $b_1 < b_2 <\dots <b_q$.
Note that any two consecutive vertices of $G$ have at least one
common neighbour. Otherwise, by identifying two consecutive vertices with no common neighbour lying say in $A$, we obtain a graph with parts of sizes $p-1,q$ and with $m(p,q,\ell)$ edges. This is a contradiction since clearly $m(p,q,\ell)>m(p-1,q,\ell)$.

For $1\leqslant i\leqslant p-1$, let
$$A_i=\{b_j \mid \exists \ i_1\leqslant i < i_2 \text{ such that } a_{i_1}b_j,a_{i_2}b_j\in E(G)\}.$$
Also let $A'_i=A_i\setminus \{b_h\}$, where $h$ is the smallest index for which $b_h\in A_i$.
Since $G$ is $K_{2,\ell}$-interval minor free, $|A'_i|\leqslant \ell-2$.
For each vertex $b_j\in B$, define
$$D(b_j)=\{a_i \mid j \text{ is the smallest index such that } a_i \text{ is adjacent to } b_j\},$$ and
let $d'(b_j)=|D(b_j)|$. Every vertex in $N(b_j)\setminus D(b_j)$ is adjacent to $b_j$ and also to some vertex $b_h\in B$ with $h<j$ and hence
\begin{equation} \label{dajdj}
 d(b_j)-d'(b_j)\leqslant \ell-1
\end{equation}
since $G$ is $K_{2,\ell}$-interval minor free.
Let $h$ and $h'$ be the smallest and largest indices such that $a_h,a_{h'} \in N(b_j)\setminus D(b_j)$, respectively. Observe that $h'-h \geqslant d(b_j)-d'(b_j)-1$.
We claim that $b_j$ appears in sets $A'_{h},A'_{h+1},\dots,A'_{h'-1}$. Let $h \leqslant i < h'$.
Since $b_j$ is adjacent to $a_h$ and to $a_{h'}$, we have $b_j\in A_{i}$.
We know that $a_h$ is adjacent to some vertex $b_{j_1}$ with $j_1<j$. Also
$a_{h'}$ is adjacent to some vertex $b_{j_2}$ with $j_2<j$. Suppose that $j_1 \leqslant j_2$. Now we use the property that every two consecutive vertices of $G$ have at least one
common neighbour for consecutive pairs of vertices $b_t,b_{t+1}$ ($t=j_1,\dots,j_2-1$). It follows that there is $j_1\leqslant j_0\leqslant j_2$ such
that $b_{j_0}$ is in $A_{i}$. If $j_2<j_1$, the same property used for $t=j_2,\dots,j_1-1$ shows that there exists $j_0$, $j_2\leqslant j_0\leqslant j_1$, such
that $b_{j_0} \in A_i$.
Since $j_0<j$, from the definition of $A'_{i}$, we conclude that $b_j\in A'_{i}$. So we have proved the claim. We conclude that
$b_j$ appears in sets $A'_h,A'_{h+1},\dots,A'_{h+t-1}$ for some $1\leqslant h\leqslant p-1$ and $t=d(b_j)-d'(b_j)-1$.

Let $S=\{i \mid 1\leqslant i\leqslant p-1, i\not\equiv 1, \ldots,e-1 \pmod{\ell-2}\}$. We have $|S|= r(\ell-1-e)$. By the conclusion in the last paragraph, each $b_j\in B$ appears in at least
$d(b_j)-d'(b_j)-1$ consecutive sets $A'_i$. Combined with (\ref{dajdj}), we conclude that $b_j$ appears in at least
$d(b_j)-d'(b_j)-1-(e-1)$ of sets $A'_i$, where $i\in S$. Note that this number is negative for
$j=1$ since $d(b_1)=d'(b_1)$. Now it follows that
\begin{equation}\label{eq:2}
\sum_{i=2}^q (d(b_j)-d'(b_j)-e) \leqslant \sum_{i\in S} |A'_i|.
\end{equation}
By adding $d(b_1)-d'(b_1)$ to the left side of (\ref{eq:2}) and noting that $\sum_j d(b_j) = |E(G)|$ and $\sum_j d'(b_j) = p$, we obtain therefrom that
$$|E(G)|-p-eq+e\leqslant r(\ell-1-e)(\ell-2).$$
This in turn yields that
$|E(G)|\leqslant r\ell(\ell-2)+ef$, which we were to prove.
\end{proof}

Example \ref{ex2} describes extremal graphs for Theorem \ref{rs2}. They are concatenations of complete bipartite graphs, all of which but at most one are copies of $K_{\ell-1,\ell-1}$. If $e=1$ and $f>1$, vertices of degree 1 can be inserted anywhere between two consecutive neighbors of their neighbor in $A$. But in all other cases, we believe that all extremal graphs are as in Example \ref{ex2}, except that the order of concatenations can be different.

\section{$K_{2,2}$ as interval minor}

In this section we determine the structure of $K_{2,2}$-interval minor free bipartite graphs.
We first define two families of $K_{2,2}$-interval minor free graphs.
For every positive integer $n\geqslant 3$, let $A=\{x,a_1,\ldots,\allowbreak a_{n-1},z\}$ and $B=\{b_1,y,b_2',b_2,\ldots,b_{n-1},b_{n-1}',t,b_{n}\}$ with ordering $x<a_1<\cdots<a_{n-1}<z$ and $b_1<y<b_2'<b_2<\cdots<b_{n-1}'<t<b_{n}$, respectively. Let $R_n$ be the bipartite graph with parts $A,B$ and edge set
$$
   E(G)=\{a_ib_{i},a_ib_{i+1} \mid 1\leqslant i\leqslant n-1\}\cup\{xy,a_1b_2',a_{n-1}b_{n-1}',zt\}.
$$
Similarly we define a graph $S_n$ for every integer $n\geqslant 2$. Let
$A=\{x,a_1,\ldots,\allowbreak a_{n-1},a_{n-1}',z,a_n\}$ and $B=\{b_1,y,b_2',b_2,\ldots,b_{n},t\}$ with ordering $x<a_1<\cdots<a_{n-1}'<z<a_n$ and $b_1<y<b_2'<b_2<\cdots<b_{n}<t$, respectively. Let $S_n$ be the bipartite graph with parts $A,B$ and edge set
$$E(G)=\{a_ib_{i},a_ib_{i+1} \mid 1\leqslant i\leqslant n-1\}\cup\{xy,a_1b_2',a_{n-1}'b_n,zt,a_nb_n\}.$$
For instance, $R_5$ and $S_4$ are shown in Figure \ref{figsn}.

\begin{figure}[htb]
\centering
\includegraphics[width=8.5cm]{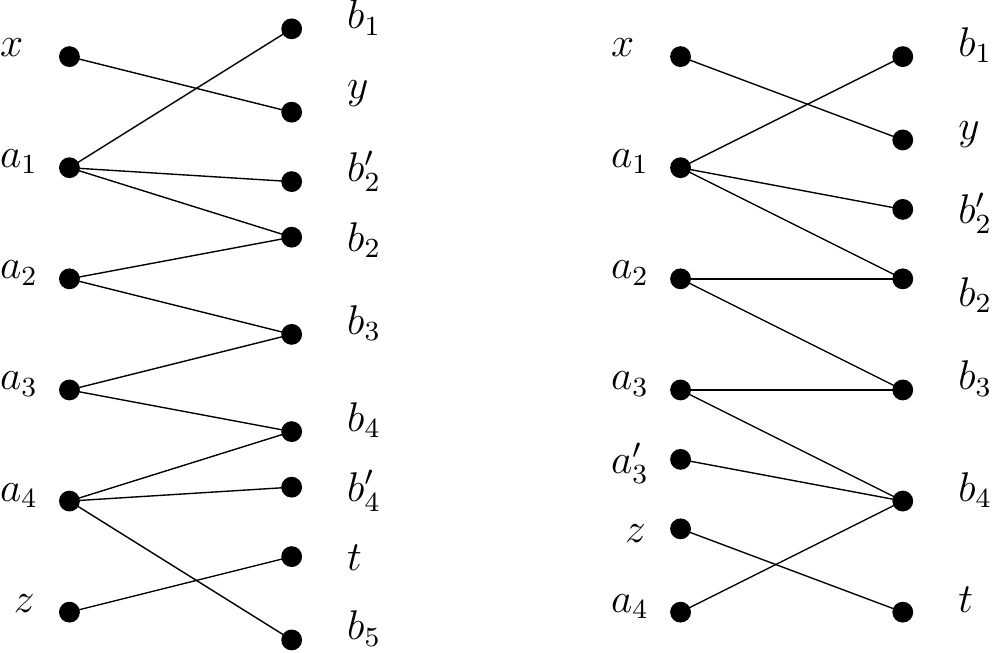}
\caption{The graphs $R_5$ and $S_4$.}\label{figsn}
\end{figure}

\begin{lemma}
For every positive integers $p$ and $q$, we have
$m(p,q,2)=p+q-1$.
\end{lemma}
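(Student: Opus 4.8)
The plan is to prove the matching bounds $m(p,q,2)\le p+q-1$ and $m(p,q,2)\ge p+q-1$. The upper bound needs no new work: it is Lemma~\ref{frtbd} specialized to $\ell=2$, which gives $m(p,q,2)\le(2-1)(p-1)+q=p+q-1$.

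For the lower bound I would exhibit one $K_{2,2}$-interval minor free ordered bipartite graph with parts of sizes $p$ and $q$ and exactly $p+q-1$ edges, namely a \emph{monotone staircase}. Choose lattice points $(i_1,j_1)=(1,1),(i_2,j_2),\dots,(i_m,j_m)=(p,q)$ so that every step increments exactly one coordinate by $1$; there are $p-1$ increments of the first coordinate and $q-1$ of the second, so $m=p+q-1$, and every index of $\{1,\dots,p\}$ occurs among the $i_t$ and every index of $\{1,\dots,q\}$ among the $j_t$. Let $G$ have edge set $\{a_{i_t}b_{j_t}\mid 1\le t\le m\}$; then $G$ uses all vertices of both parts and has exactly $p+q-1$ edges.

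It remains to check that $G$ avoids $K_{2,2}$ as an interval minor. Since $K_{2,2}$ has two vertices in each part, $G$ contains it as an interval minor exactly when $A$ can be split into two nonempty intervals $I_1<I_2$ and $B$ into two nonempty intervals $J_1<J_2$ with every one of the four blocks $I_s\times J_t$ containing an edge of $G$. Suppose such a split existed, with $I_1$ the first $k$ vertices of $A$ and $J_1$ the first $h$ vertices of $B$. Take an edge $a_{i_s}b_{j_s}$ in $I_1\times J_2$ (so $i_s\le k$ and $j_s>h$) and an edge $a_{i_t}b_{j_t}$ in $I_2\times J_1$ (so $i_t>k$ and $j_t\le h$). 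From $i_s<i_t$ and the monotonicity $i_1\le i_2\le\cdots\le i_m$ we get $s<t$, while from $j_t<j_s$ and the monotonicity $j_1\le j_2\le\cdots\le j_m$ we get $t<s$, a contradiction. Hence for every split one of the four blocks is empty, so $G$ is $K_{2,2}$-interval minor free, and therefore $m(p,q,2)\ge|E(G)|=p+q-1$.

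Combining the two bounds gives $m(p,q,2)=p+q-1$. There is no serious obstacle here; the only points needing care are the clean description of when $K_{2,2}$ is an interval minor (two consecutive intervals on each side with all four quadrants nonempty) and the monotonicity observation that a monotone staircase can meet at most one of the two off-diagonal quadrants $I_1\times J_2$ and $I_2\times J_1$. That monotonicity is exactly what fails for non-monotone edge sets, and it is what makes the staircase extremal.
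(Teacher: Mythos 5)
Your proof is correct. The upper bound is handled exactly as in the paper (Lemma~\ref{frtbd} with $\ell=2$), but your lower-bound construction and its verification take a different, and in some ways cleaner, route: the paper builds its extremal example from the graph $S_{p-3}$ (one of the two families $R_n,S_n$ introduced for the structural analysis of $K_{2,2}$-interval minor free graphs), adding the edges $a_1y$, $zb_{p-3}$ and $q-p$ pendant vertices, and it leans on that machinery rather than giving an explicit freeness check; you instead take an arbitrary monotone staircase (a Hamiltonian zigzag path on $A\cup B$ with $p+q-1$ edges) and verify $K_{2,2}$-freeness directly from the characterization that $K_{2,2}$ is an interval minor exactly when both parts split into two consecutive intervals with all four blocks containing an edge, ruling this out by the monotonicity of the staircase (the two off-diagonal blocks cannot both be hit). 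That characterization is exactly the standard reformulation of the paper's definition (identifications of consecutive vertices correspond to interval partitions, and edge deletions only remove edges), and it is implicitly used elsewhere in the paper, so invoking it is harmless; if you wanted the write-up fully self-contained you could add the one-line justification. Your approach buys a simpler, more flexible family of extremal examples valid for all $p,q\ge 1$ without case distinctions such as the paper's $p\le 4$ versus $5\le p\le q$, while the paper's choice has the advantage of tying the extremal examples to the families $R_n,S_n$ that organize its structure theorem for $K_{2,2}$-interval minor free graphs.
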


\begin{proof}
By Lemma \ref{frtbd}, $m(p,q,2)\leqslant p+q-1$. We construct $K_{2,2}$-interval minor free bipartite graphs with parts of sizes $p,q$ and with $p+q-1$ edges. This is easy if
$p\leqslant 4$. So let $5\leqslant p\leqslant q$. Consider $S_{p-3}$ and add edges $a_1y, zb_{p-3}$. Also add $q-p$
vertices into the set $B$, all of them ordered between $y$ and $b_2'$, and join each of them to $a_1$. The resulting graph has parts of size $p,q$ and has $p+q-1$ edges.
\end{proof}

In what follows we assume that $(G;A,B)$ is a bipartite graph without $K_{2,2}$ as an interval minor. Let $A$ and $B$ have the ordering $a_1 < a_2 < \dots <a_p$ and $b_1 < b_2 <\dots <b_q$, respectively. A vertex in $G$ of degree 0 is said to be \emph{reducible}. If $d(a_i)=1$ and the neighbor $b_j$ of $a_i$ is adjacent to $a_{i-1}$ if $i>1$ and is adjacent to $a_{i+1}$ if $i<p$, then $a_i$ is also said to be \emph{reducible}. Similarly we define when a vertex $b_j\in B$ is reducible. Clearly, if $a_i$ (or $b_j$) is reducible, then $G$ has a $K_{2,2}$-interval minor if and only if $G-a_i$ ($G-b_j$) has one. Therefore, we may assume that we remove all reducible vertices from $G$. When $G$ has no reducible vertices, we say that $G$ is \emph{reduced}, which we assume henceforth.

Let $X=\{a_1,a_2\}$ if $d(a_1)=1$ and $X=\{a_1\}$, otherwise. Similarly, let
$Y=\{a_{p-1},a_p\}$ if $d(a_p)=1$ and $Y=\{a_p\}$, otherwise; $Z=\{b_1,b_2\}$ if $d(b_1)=1$ and $Z=\{b_1\}$, otherwise;
$T=\{b_{q-1},b_q\}$ if $d(b_q)=1$ and $T=\{b_q\}$, otherwise.
We may assume that all these sets are mutually disjoint. Otherwise $G$ has a simple
structure -- it is equivalent to a subgraph of a graph shown in Figure \ref{fig:XY} and any such graph has no $K_{2,2}$ as interval minor. Note that each such subgraph becomes equivalent to a subgraph of $R_2$ after removing reducible vertices.

\begin{figure}[htb]
\centering
\includegraphics[width=2cm]{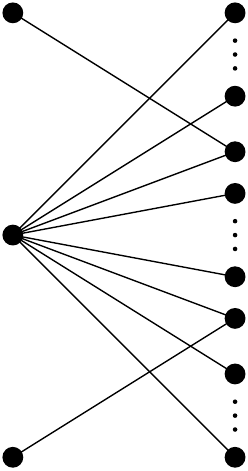}
\caption{$X$ and $Y$ intersect only in special situations.}\label{fig:XY}
\end{figure}

\begin{claim}\label{Xadj}
There is an edge from $X$ to $b_1$ or $b_q$.
\end{claim}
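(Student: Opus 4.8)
The plan is to argue by contradiction: suppose no edge joins $X$ to $b_1$ and no edge joins $X$ to $b_q$. Recall $X$ is $\{a_1\}$, or $\{a_1,a_2\}$ when $d(a_1)=1$; in the latter case $a_1$ is adjacent to some $b_j$ which (since $G$ is reduced and $d(a_1)=1$) is also adjacent to $a_2$, so the neighbourhood of $X$ is essentially controlled by $a_2$. So in either case there is a vertex $a\in\{a_1,a_2\}$ through which all of the ``low'' part of $B$ must be reached, and by assumption $a$ is not adjacent to $b_1$ nor to $b_q$. Pick the smallest index $j$ and the largest index $k$ with $ab_j, ab_k\in E(G)$; by assumption $1<j\le k<q$.

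First I would handle the trivial case: if $G$ has an isolated vertex or, more relevantly, if $b_1$ (resp. $b_q$) has degree $0$, it is reducible, contradicting that $G$ is reduced; so $b_1$ and $b_q$ each have a neighbour, necessarily lying in $A\setminus\{a_1\}$ (and in $A\setminus\{a_1,a_2\}$ in the $d(a_1)=1$ case, again by the reduced hypothesis). Then I would use the $K_{2,2}$-interval-minor-free condition together with the ``consecutive vertices have a common neighbour'' phenomenon. The key point: $b_1$ has a neighbour $a_s$ with $s\ge 2$ (or $s\ge 3$), and $b_q$ has a neighbour $a_t$ with $t\ge 2$ (or $t\ge 3$). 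Combined with the fact that $a$ is adjacent to $b_j$ with $j>1$ and to $b_k$ with $k<q$, one gets vertices on ``both sides'' of $a$ in the $A$-order and ``both sides'' of $b_j$ in the $B$-order, which is exactly the shape needed to contract down to $K_{2,2}$ — unless several of these indices coincide, which is where the reduced/consecutive-common-neighbour structure must be invoked to rule out the degenerate configurations.

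The main obstacle I expect is the bookkeeping of the small cases, in particular when $d(a_1)=1$ (so $X=\{a_1,a_2\}$) and the neighbours of $b_1$ or $b_q$ are forced to sit at $a_2$ itself, or when $j=k$ (i.e.\ $a$ has a single neighbour in $B$) — here I would separately note that $a_1$ or $a_2$ being reachable only from one vertex of $B$, together with reducedness, forces that vertex to be $b_1$ or $b_q$, giving the desired edge directly. Assuming those degeneracies are excluded, the contradiction comes from exhibiting the four ``corner'' vertices of a $K_{2,2}$: rows $a$ and $a_s$ (with $s>2$, coming from $b_1$'s neighbour) and columns $b_j$ and $b_1$, after contracting the intervals $[a_1,a]$, $[a, a_s]$ (or the appropriate sub-intervals), $[b_1,\text{something}]$ and $[\text{something},b_j]$; the common-neighbour property guarantees these contracted super-rows and super-columns are still pairwise adjacent. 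I would organize the write-up as: (1) reduce to the vertex $a$ and indices $j\le k$; (2) dispose of $j=k$; (3) locate neighbours of $b_1$ and $b_q$ in $A$ and argue one of them lies strictly above $a$ or the symmetric statement; (4) contract to obtain $K_{2,2}$, using the reduced hypothesis to keep everything non-degenerate.
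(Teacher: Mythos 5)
Your overall plan (contradiction, then contract intervals to exhibit a $K_{2,2}$ using neighbours of $b_1$ and $b_q$ outside $X$) is the right one, but the execution has genuine errors. First, you invert the meaning of reducedness: you claim that when $d(a_1)=1$ the neighbour of $a_1$ ``is also adjacent to $a_2$'' because $G$ is reduced. The definition says exactly the opposite --- a degree-one $a_1$ is \emph{reducible} precisely when its neighbour is adjacent to $a_2$, so in a reduced graph the neighbour of $a_1$ is \emph{not} adjacent to $a_2$. This false premise is what lets you replace $X$ by a single vertex $a\in\{a_1,a_2\}$, and that replacement is where the proof breaks: when $X=\{a_1,a_2\}$ it can happen that $a_1$ and $a_2$ each have exactly one neighbour, and these neighbours are distinct interior vertices of $B$. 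Your treatment of this case ($j=k$) asserts that reducedness forces the unique neighbour to be $b_1$ or $b_q$, which is simply false (e.g.\ $a_1$ of degree one attached to $b_3$ with $b_3\notin N(a_2)$ is not reducible). The correct move, and the one the paper makes, is to work with $X$ as a whole: reducedness guarantees that $X$ has \emph{two distinct} neighbours $b_i<b_j$, both interior by the contradiction hypothesis, and then contracting $X$, $A\setminus X$, $\{b_1,\dots,b_i\}$, $\{b_{i+1},\dots,b_q\}$ gives $K_{2,2}$ at once, with the $(A\setminus X)$-block hitting both $B$-blocks via the edges $a_kb_1$ and $a_lb_q$.

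Two further problems: you invoke the ``consecutive vertices have a common neighbour'' property, but that was derived only for edge-maximal graphs in the proof of Theorem \ref{rs2}; it does not hold for general reduced $K_{2,2}$-interval-minor free graphs (already $R_n$ violates it) and it is not needed --- block adjacency in an interval minor only requires one edge between the two blocks. Finally, your concrete contraction (``rows $a$ and $a_s$, columns $b_1$ and $b_j$'') does not supply all four required block adjacencies: since $j$ is the \emph{smallest} index of a neighbour of $a$, the block containing $b_1$ (lying below $b_j$) need not see $a$'s block, and the block ending at $b_j$ need not see the block containing $a_s$, whose only guaranteed edges go to $b_1$ and $b_q$. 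So as written the construction does not yield $K_{2,2}$; the symmetric use of both $b_1$ and $b_q$ together with two neighbours of $X$, as above, is what closes the argument.
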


\begin{proof}
Suppose that there is no edge from $X$ to $\{b_1,b_q\}$.
Since $G$ is reduced, there are two distinct vertices $b_i$ and $b_j$ $(1<i<j<q)$
connected to $X$. Assume that $b_1$ and $b_q$ are adjacent to $a_k$ and $a_l$, respectively.
Note that $a_k,a_l\not\in X$. Consider the sets $X$, $A\setminus X, \{b_1,\ldots,b_i\}$ and $\{b_{i+1},\ldots,b_q\}$ and identify them to single vertices to get $K_{2,2}$ as an interval minor, a contradiction.
\end{proof}

Note that Claim \ref{Xadj} also applies to $Y, Z$ and $T$.
Hence, considering an equivalent graph of $G$ instead of $G$ if necessary, we may assume that there is an edge from $X$
to $Z$. If there is no edge from $Y$ to $T$, then there are edges from $Y$ to $Z$ and from $T$ to $X$. By reversing the order of $B$, we obtain an equivalent graph that has edges from $X$ to $Z$ and from $Y$ to $T$. Thus we may assume henceforth that the following claim holds:

\begin{claim}\label{cl:XZYT}
The graph $G$ has edges from $X$ to $Z$ and from $Y$ to $T$.
\end{claim}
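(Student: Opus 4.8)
The plan is to obtain this normal form using only Claim \ref{Xadj} together with the fact that reversing the linear order on $B$ produces an equivalent graph — an operation that interchanges the bottom pair $Z$ with the top pair $T$, leaves $X$ and $Y$ unchanged, and preserves both reducedness and the absence of $K_{2,2}$ as an interval minor. As already noted, Claim \ref{Xadj} applies to each of $X$, $Y$, $Z$, $T$; since $b_1\in Z$, $b_q\in T$, $a_1\in X$ and $a_p\in Y$, this yields an edge joining $X$ to $Z\cup T$, an edge joining $Y$ to $Z\cup T$, and an edge joining $T$ to $X\cup Y$ (these three instances are all that we shall need).

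First I would normalize so that there is an edge from $X$ to $Z$: the edge joining $X$ to $Z\cup T$ either already lands in $Z$, or it lands in $T$, in which case reversing the order on $B$ turns it into an edge from $X$ to the new bottom pair. So assume henceforth that $G$ has an edge from $X$ to $Z$. If $G$ also has an edge from $Y$ to $T$, the claim holds. Otherwise, I would argue as follows: by Claim \ref{Xadj} applied to $Y$, the edge joining $Y$ to $Z\cup T$ cannot land in $T$, hence there is an edge from $Y$ to $Z$; and by Claim \ref{Xadj} applied to $T$, the edge joining $T$ to $X\cup Y$ cannot land in $Y$ (that would be an edge from $Y$ to $T$), hence there is an edge from $T$ to $X$. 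At this point $G$ has edges from $X$ to $Z$, from $Y$ to $Z$, and from $X$ to $T$.

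Finally I would reverse the order on $B$ once more. In the resulting equivalent graph the new bottom pair equals the old $T$ and the new top pair equals the old $Z$, while $X$ and $Y$ are untouched; hence the old $X$--$T$ edge becomes an $X$--$Z$ edge and the old $Y$--$Z$ edge becomes a $Y$--$T$ edge. Thus this equivalent graph, which we may take in place of $G$, has edges from $X$ to $Z$ and from $Y$ to $T$, as desired.

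The argument is routine; the only points that require care are the bookkeeping of which end-pairs of $B$ are swapped by each reversal, checking that the single case split (whether or not $Y$ is joined to $T$) together with the three invocations of Claim \ref{Xadj} is exhaustive, and noting that the standing hypotheses carried through this part of the paper — that $G$ is reduced and that $X$, $Y$, $Z$, $T$ are pairwise disjoint — are themselves invariant under reversing an order, so they still hold for the equivalent graph we end up with.
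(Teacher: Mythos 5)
Your proof is correct and follows essentially the same route as the paper: apply Claim \ref{Xadj} (and its symmetric versions for $Y$ and $T$), normalize by passing to an equivalent graph so that an $X$--$Z$ edge exists, and, when no $Y$--$T$ edge exists, deduce edges $Y$--$Z$ and $T$--$X$ and reverse the order on $B$ to swap $Z$ with $T$. The only difference is that you spell out the reversal bookkeeping more explicitly than the paper does.
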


\begin{claim}\label{eachdeg}
Every vertex of $G$ has degree at most $2$, except possibly one of $a_2,b_2$ and/or one of $a_{p-1},b_{q-1}$, which may be of degree $3$. If $d(a_2)=3$, then it has neighbors $b_1,b_3,b_4$, we have $d(a_1)=d(b_1)=d(b_2)=1$ and $a_1b_2\in E(G)$. Similar situations occur when $b_2$, $a_{p-1}$, or $b_{q-1}$ are of degree $3$.
\end{claim}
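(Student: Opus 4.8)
The plan is to suppose that some vertex of $G$ has degree at least $3$ and to deduce the stated local structure. Since the statement is invariant under the equivalence that exchanges the two parts of $G$, it is enough to analyse a vertex $a_i\in A$ with $d(a_i)\geqslant 3$; fix a neighbour $b_m$ of $a_i$ that is neither the smallest nor the largest neighbour of $a_i$. The tool I would use throughout is the elementary fact that $G$ has $K_{2,2}$ as an interval minor whenever there are indices $1\leqslant k<q$ and $1\leqslant \ell<p$ for which each of the four ``quadrants'' --- the subgraphs induced on $\{a_1,\dots,a_\ell\}\cup\{b_1,\dots,b_k\}$, on $\{a_1,\dots,a_\ell\}\cup\{b_{k+1},\dots,b_q\}$, on $\{a_{\ell+1},\dots,a_p\}\cup\{b_1,\dots,b_k\}$, and on $\{a_{\ell+1},\dots,a_p\}\cup\{b_{k+1},\dots,b_q\}$ --- contains at least one edge; one simply identifies each of the four blocks to a single vertex and deletes the edges not needed. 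Since $G$ avoids $K_{2,2}$, for every choice of $k$ and $\ell$ at least one quadrant is edgeless. Applying this with $k\in\{m-1,m\}$ yields two consequences I would use repeatedly: \emph{(i)} $a_i$ is the \emph{only} vertex of $A$ that has both a neighbour of index $<m$ and a neighbour of index $>m$; and \emph{(ii)} every neighbour of $b_m$ other than $a_i$ has $b_m$ as its only neighbour, hence degree $1$.

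I would next locate $a_i$. As $d(a_i)\geqslant 3$ we have $d(a_i)\neq 1$, so the set $X$ introduced just before Claim~\ref{Xadj} is $\{a_1\}$ when $i=1$, and $Y=\{a_p\}$ when $i=p$. By Claim~\ref{cl:XZYT} there is an edge $e^{-}$ from $X$ into $\{b_1,b_2\}$ and an edge $e^{+}$ from $Y$ into $\{b_{q-1},b_q\}$. Because $a_i$ has at least three neighbours, its smallest neighbour has index $\leqslant q-2$ and its largest index $\geqslant 3$; hence the $B$-endpoint of $e^{+}$ lies at or above all neighbours of $a_i$ except the top one, and that of $e^{-}$ at or below all but the bottom one. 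Taking $\ell$ between the $A$-endpoints of $e^{-}$ and $e^{+}$ and a suitable $k$ near the corresponding end of $B$, the quadrant argument forces a contradiction unless $a_i$ is close to that corner or the corner vertex is pendant --- and a pendant vertex next to a corner vertex would be reducible. A short case analysis --- cases $i=1$, $i=p$, and $3\leqslant i\leqslant p-2$, each subdivided by whether the extreme neighbours of $a_i$ lie in the first two or last two vertices of $B$ --- should then give $i\in\{2,p-1\}$.

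Now take $i=2$ (the case $i=p-1$ is symmetric under reversing the order on $A$). First I would show $a_2\sim b_1$: otherwise, applying \emph{(i)}--\emph{(ii)} with $k=1$ together with reducedness and Claim~\ref{cl:XZYT}, the vertex $b_1$ is pushed into a configuration that either produces four nonempty quadrants or makes some vertex reducible. The two remaining neighbours of $a_2$ are then the consecutive pair $b_3,b_4$: one checks $a_2\not\sim b_2$ (if $a_2\sim b_2$, then $a_1$ --- or any other vertex adjacent to $b_1$ or $b_2$ --- together with $a_2$ produces four nonempty quadrants for $k\in\{1,2\}$), a skipped vertex $b_3$ or $b_4$ would by \emph{(ii)}-type reasoning be adjacent only to pendant vertices and hence be removable via reducedness, and the same reasoning bounds $d(a_2)$ by $3$. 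Since $a_2\not\sim b_2$, the unique neighbour of $b_2$ is $a_1$, so $d(b_2)=1$ and $a_1b_2\in E(G)$. Moreover $a_1\not\sim b_1$ (else $a_1$ and $a_2$ together produce four nonempty quadrants for $k=1$), so a further neighbour of $a_1$ would have index $\geqslant 3$ and, with $a_2$, produce four nonempty quadrants for $k=2$; hence $d(a_1)=1$. Finally $d(b_1)=1$, since a neighbour of $b_1$ other than $a_2$ would be pendant by \emph{(ii)}-type reasoning, be different from $a_1$, and then (the least-indexed such vertex, using $b_1\sim a_2$) be reducible or contradict Claim~\ref{cl:XZYT}.

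This gives $d(a_2)=3\Rightarrow d(b_2)=1$; the symmetric implication then shows $a_2$ and $b_2$ cannot both have degree $3$, and likewise for $a_{p-1}$ and $b_{q-1}$, so together with the location step every vertex not among $a_2,b_2,a_{p-1},b_{q-1}$ has degree at most $2$ while the exceptional ones are restricted as stated. I expect the third paragraph to be the main obstacle: the quadrant argument handles long-range edges cleanly but leaves many short-range configurations near a corner, and eliminating them one by one seems to require a delicate combined use of reducedness, the disjointness of $X,Y,Z,T$, and Claim~\ref{cl:XZYT}.
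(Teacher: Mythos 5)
Your setup is sound: the ``quadrant'' reformulation of a $K_{2,2}$-interval minor is exactly the right witness, and your consequences \emph{(i)} and \emph{(ii)} are correct (indeed \emph{(ii)} recovers, from the degree-$3$ vertex alone, the paper's first key step that a middle neighbour $b_j$ of $a_i$ has degree $1$, which the paper derives using the $X$--$Z$ and $Y$--$T$ edges). But from that point on the proposal is a plan rather than a proof, and the missing part is precisely the substance of the claim. The location step (``$i\in\{2,p-1\}$'') is argued only by ``taking $\ell$ between the $A$-endpoints of $e^-$ and $e^+$ and a suitable $k$ near the corresponding end of $B$ \dots unless $a_i$ is close to that corner or the corner vertex is pendant --- and a pendant vertex next to a corner vertex would be reducible.'' That last assertion is false as stated: in the exceptional configuration the claim allows, $a_1$ is pendant, sits next to the corner, and is \emph{not} reducible (its neighbour $b_2$ is not adjacent to $a_2$). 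Distinguishing this genuine exceptional configuration from the spurious near-corner configurations is exactly the delicate part; it cannot be dismissed by reducibility. The third paragraph, which should pin down the neighbourhood $\{b_1,b_3,b_4\}$ and the degrees of $a_1,b_1,b_2$, consists of statements of the form ``$b_1$ is pushed into a configuration that either produces four nonempty quadrants or makes some vertex reducible'' and ``by \emph{(ii)}-type reasoning,'' and you yourself flag that eliminating the short-range configurations ``seems to require a delicate combined use'' of the standing assumptions. So the core case analysis is acknowledged as not carried out.

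For comparison, the paper does not first locate $a_i$ and then determine the structure; it analyses $b_{j-1}$ and $b_{j+1}$ for the middle neighbour $b_j$ and shows each must be adjacent to $a_i$ \emph{unless} the witnessing edge coincides with the $X$--$Z$ (resp.\ $Y$--$T$) edge, and that coincidence forces $j_1=1$, $j=3$, $d(b_1)=d(a_1)=1$ and $i=2$ (resp.\ the mirror situation), while adjacency of both $b_{j-1}$ and $b_{j+1}$ to $a_i$ makes $b_j$ reducible. This single branch point is what simultaneously locates the exceptional vertex and yields its exact neighbourhood, and it also handles the interaction with small $p,q$ and with the edges guaranteed by Claim~\ref{cl:XZYT} --- the very interactions your sketch defers. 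To complete your route you would need to write out that case analysis in full (or adopt the paper's $b_{j\pm1}$ argument on top of your facts \emph{(i)} and \emph{(ii)}); as it stands there is a genuine gap.
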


\begin{proof}
Suppose that $d(a_i)\ge3$. We claim that $a_i$ has at most one neighbour in $Z$.
Otherwise, $|Z|\ge2$ and hence $d(b_1)=1$ and $a_ib_1,a_ib_2\in E(G)$. This is a contradiction since $G$ is reduced. Similarly we see that $a_i$ has at most one neighbour in $T$.

Suppose now that a middle neighbor $b_j$ of $a_i$ is in $B\setminus (Z\cup T)$. Let $b_{j_1}$ and $b_{j_2}$ be neighbors of $a_i$ with $j_1<j<j_2$. If $d(b_j)>1$, then an edge $a_kb_j$ ($k\ne i$), the edges joining $X$ and $Z$ and joining $Y$ and $T$, and the edge $a_ib_{j_1}$ (if $k<i$) or $a_ib_{j_2}$ (if $k>i$) can be used to obtain a $K_{2,2}$-interval minor. Thus, $d(b_j)=1$.

Let us now consider $b_{j-1}$.
Suppose that $b_{j-1}$ is not adjacent to $a_i$. Then $j_1<j-1$. If $b_{j-1}$ is adjacent to a vertex $a_k$, where $k<i$, then the edges $a_ib_{j_1}, a_kb_{j-1}$ and the edges joining $X$ with $Z$ and $Y$ with $T$ give rise to a $K_{2,2}$-interval minor in $G$ (which is excluded), unless the following situation occurs: the edge $a_kb_{j-1}$ is equal to the edge joining $X$ and $Z$. This is only possible if $j_1=1$, $j=3$ and $|Z|=2$, i.e., $d(b_1)=1$. If $a_1$ is adjacent to $b_1$ or to some other $b_t$ with $t>2$, we obtain a $K_{2,2}$-interval minor again. So, it turns out that $k=1$ and $d(a_1)=1$. If $i>2$, then we consider a neighbor of $a_2$. It cannot be $b_2$ since then $a_1$ would be reducible. It can neither be $b_1$ or $b_t$ with $t>2$ since this would yield a $K_{2,2}$-interval minor. Thus $i=2$.

Similarly, a contradiction is obtained when $k>i$. (Here we do not have the possibility of an exception as in the case when $k=1$.) Thus, we conclude that $b_{j-1}$ is adjacent to $a_i$ or we have the situation that $i=2$, $j=3$, etc.\ as described above. Similarly we conclude that $b_{j+1}$ is adjacent to $a_i$ unless we have $i=p-1$, $j=q-2$, etc. Note that we cannot have the exceptional situations in both cases at the same time since then we would have $i=2=p-1$ and $X\cap Y$ would be nonempty. If $a_ib_{j-1}$ and $a_ib_{j+1}$ are both edges, then $b_j$ would be reducible, a contradiction. Thus, the only possibility for a vertex of degree more than 2 is the one described in the claim.
\end{proof}

\begin{claim}\label{neiba}
We have $a_1$ adjacent to $b_1$ or we have $a_1$ adjacent only to $b_2$ and $b_1$ adjacent only to $a_2$.
\end{claim}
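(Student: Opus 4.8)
The plan is to argue by contradiction, assuming $a_1$ is not adjacent to $b_1$, and then pin down the neighbourhood of $a_1$ exactly. First recall what we already know about the bottom of the ordering: by Claim \ref{cl:XZYT} there is an edge from $X$ to $Z$, and since $G$ is reduced, $a_1$ has at least one neighbour (otherwise it would be reducible of degree $0$). Suppose $a_1b_1\notin E(G)$. If $d(a_1)\ge 1$ but $a_1\not\sim b_1$, then by Claim \ref{cl:XZYT} the $X$--$Z$ edge must use $a_2$ (so $d(a_1)=1$ is forced, as I explain next) or use $a_1$ together with $b_2$ (forcing $|Z|=2$, i.e. $d(b_1)=1$). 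I would handle these two cases in turn, in each case showing that any "extra" edge at $a_1$ or at $b_1$ creates a $K_{2,2}$-interval minor using the guaranteed $Y$--$T$ edge at the top.

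The key step is the following minor-extraction argument, which is the same device used repeatedly in the proof of Claim \ref{eachdeg}: if $a_1$ has a neighbour $b_t$ with $t$ large and $b_1$ has a neighbour $a_k$ with $k\ge 2$, then contracting $A\setminus\{a_1\}$ one way and using the $Y$--$T$ edge, together with $a_1b_t$ and $a_kb_1$, yields $K_{2,2}$ as an interval minor — contradiction. Concretely: in the first case ($a_2$ carries the $X$--$Z$ edge, so $a_2\sim b_1$ or $a_2\sim b_2$ with $d(b_1)=1$), I claim $d(a_1)=1$: if $a_1$ had two neighbours $b_s<b_t$, then since $b_1$ is adjacent to $a_2$ (or to $a_2$ after removing the reducible $b_1$), the pair of edges $a_1b_s$ (or $a_1b_t$) and $a_2b_1$ plus the top $Y$--$T$ edge gives the forbidden minor, unless the neighbour of $a_1$ coincides with $b_1$ — which is exactly the case we excluded. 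Hence $d(a_1)=1$, say $a_1\sim b_j$. Now I argue $j=2$: if $j\ge 3$ then $b_1,b_2$ lie strictly below $b_j$ and strictly above nothing, and since $b_1$ has a neighbour $a_k$ with $k\ge 2>1$, the edges $a_1b_j$, $a_kb_1$ and the $Y$--$T$ edge again give $K_{2,2}$; if $j=1$ we are not in this case. So $a_1\sim b_2$ only. Finally I show $b_1$ is adjacent only to $a_2$: $b_1$ has some neighbour $a_k$ with $k\ge 2$ (it can't be $a_1$), and if $b_1$ also had a neighbour $a_m$ with $m>k\ge 2$, then $a_1b_2$, $a_mb_1$, and the $Y$--$T$ edge produce the minor (using that $a_1\ne a_m$ and $b_2>b_1$); and if $k>2$ then the edge $a_2b_2$ or the $X$--$Z$ edge at $a_2$ combines with $a_kb_1$ to give the minor as well, so $k=2$.

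The second case ($d(b_1)=1$ and $a_1b_2\in E(G)$, carrying the $X$--$Z$ edge) is symmetric in flavour: here $b_1$ is adjacent only to $a_1$, but since $d(b_1)=1$, the vertex $b_1$ is reducible unless $a_1\sim b_2$ witnesses reducibility's failure — wait, $b_1$ reducible would need $b_2\sim a_1$ which holds, so actually $b_1$ would be reducible, contradicting that $G$ is reduced; thus this case cannot arise with $a_1\not\sim b_1$, or it collapses into the excluded alternative. Tidying up this degenerate case is where a little care is needed, but it introduces no new idea.

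The main obstacle I expect is bookkeeping the boundary exceptions: as in Claim \ref{eachdeg}, the minor-extraction fails precisely when the "second" edge one wants to use is forced to coincide with one of the four guaranteed boundary edges ($X$--$Z$ or $Y$--$T$), and one must check that in every such coincidence either $G$ degenerates to the claimed configuration ($a_1\sim b_2$ only, $b_1\sim a_2$ only) or a different minor can be extracted. Keeping track of which of $a_1,a_2$ and $b_1,b_2$ have degree $1$ (i.e. whether $|X|=2$ and $|Z|=2$) throughout is the fiddly part, but it is entirely parallel to the casework already carried out in the proof of Claim \ref{eachdeg}.
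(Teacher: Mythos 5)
There is a genuine gap. A $K_{2,2}$ interval minor requires \emph{four} edges, one in each of the four quadrants determined by a split of $A$ and a split of $B$; at the key steps you claim that two specific edges together with the $Y$--$T$ edge already give the minor, which is only three edges, and the missing fourth edge is always the one in the bottom-left quadrant (from the initial segment of $A$ containing $a_1$ to the low part of $B$) --- precisely the edge whose existence is in doubt once $a_1b_1\notin E(G)$. Concretely, in your first case take the $X$--$Z$ edge to be $a_2b_1$ and let $b_j$, $j\geqslant 3$, be the unique neighbour of $a_1$: the edges $a_1b_j$, $a_2b_1$ and the $Y$--$T$ edge occupy only three quadrants of any split, so no contradiction follows from them alone. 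Excluding this configuration needs exactly the dichotomy the paper runs: either some $a_i$ with $i\geqslant 3$ has a neighbour below $b_j$, which supplies the fourth edge, or else every vertex among $b_1,\dots,b_{j-1}$ has all its neighbours in $\{a_1,a_2\}$, hence only $a_2$, and then $b_1$ is reducible. You defer exactly this to ``bookkeeping the boundary exceptions,'' but it is the heart of the argument, not a boundary nuisance.

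Your second case is handled incorrectly. You assert that ``$b_1$ is adjacent only to $a_1$,'' which contradicts the standing hypothesis $a_1b_1\notin E(G)$, and you then test reducibility of $b_1$ by asking whether $a_1\sim b_2$; by the paper's definition, $b_1$ is reducible only if \emph{its own} unique neighbour is adjacent to $b_2$. Consequently your conclusion that this case ``cannot arise'' is false: it is exactly the second alternative of the claim (it occurs at the bottom of $R_n$ and $S_n$), and what must be proved there is that $b_1$'s unique neighbour is $a_2$ and that $d(a_1)=1$, which again requires the dichotomy-plus-reducedness argument rather than the contradiction you claim. For comparison, the paper's proof avoids this casework entirely: if $X$ had a neighbour outside $Z$ and $Z$ a neighbour outside $X$, those two edges together with the $X$--$Z$ and $Y$--$T$ edges fill all four quadrants; hence one may assume all neighbours of $X$ lie in $Z$, whence $a_1b_2\in E(G)$, $d(a_1)=d(b_1)=1$, $a_2b_2\notin E(G)$ (else $a_1$ is reducible), and therefore $a_2b_1\in E(G)$.
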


\begin{proof}
Suppose that $a_1b_1\notin E(G)$.
By Claim \ref{cl:XZYT}, $X$ is adjacent to $Z$ and $Y$ to $T$. If $X$ is adjacent to a vertex $b_j\notin Z$ and $Z$ is adjacent to a vertex $a_i\notin X$, then we have a $K_{2,2}$-interval minor in $G$. Thus, we may assume that $X$ has no neighbors outside $Z$. Since $a_1b_1\notin E(G)$, we have that $a_1b_2\in E(G)$. In particular, $d(a_1)=1$ and $d(b_1)=1$. Then $a_2\in X$ and $b_2\in Z$. Since $G$ is reduced, $a_2b_2\notin E(G)$. Since all neighbors of $X$ are in $Z$, we conclude that $a_2$ is adjacent to $b_1$. This yields the claim.
\end{proof}

The same argument applies to the bottom vertices.

We can now describe the structure of $K_{2,2}$-interval minor free graphs. In fact, we have proved the following theorem.

\begin{thm}\label{maink22}
Every reduced bipartite graph with no $K_{2,2}$ as an interval minor is equivalent to a subgraph of $R_n$ or $S_n$ for some positive integer $n$.
\end{thm}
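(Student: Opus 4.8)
The plan is to glue the local statements of Claims~\ref{Xadj}--\ref{neiba}, together with their three symmetric counterparts at the other corners, into a single global description of~$G$. By Claim~\ref{cl:XZYT} we may assume that $G$ has an edge from $X$ to $Z$ and an edge from $Y$ to $T$; these two edges serve as a fixed \emph{frame} anchoring $G$ at the bottom-left and top-right corners. By Claim~\ref{eachdeg}, $G$ has maximum degree at most $2$ except for at most one degree-$3$ vertex near the bottom and at most one near the top, and in each such case the whole local configuration is prescribed. I would first set those one or two configurations aside --- they are precisely the patterns that reappear as the pendant vertices $b_2'$ and $b_{n-1}'$ in $R_n$ and $S_n$ --- and reduce to the case $\Delta(G)\le 2$. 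Since $G$ is reduced it has no isolated vertices, so $G$ is now a disjoint union of paths and even cycles.

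The second step is to rule out cycles. If $C$ is a cycle of $G$, let $a$ be the vertex of $C\cap A$ of largest index and $b,b'$ its two neighbours on $C$. The remaining vertices of $C\cap A$ all have smaller index than $a$, so they can be merged, by a legal sequence of identifications of consecutive vertices, into a single vertex $\alpha$; then $\alpha$ is adjacent to both $b$ and $b'$, so $\{\alpha,a\}$ and $\{b,b'\}$ span a copy of $K_{2,2}$, giving a $K_{2,2}$-interval minor of $G$, a contradiction. Hence $G$ is a forest and each of its components is a path.

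The third step pins down the shape of these paths. Using the two frame edges, one shows that no path component is non-monotone: a vertex both of whose path-neighbours have index on the same side of it, or a spot where the $A$- or the $B$-index drops as one traverses the path, combines with an edge at the opposite corner of the frame to give a $K_{2,2}$-interval minor. Thus, after reversing the order of $A$ and/or of $B$ if necessary (i.e.\ up to equivalence), every component is a monotone ``staircase''. A variant of the same argument rules out a path --- in particular a single edge $uv$ --- lying strictly between the two frame corners and disjoint from the path through them: contracting $u$ with the staircase vertex just below it and $v$ with the staircase vertex just above it already exhibits a $K_{2,2}$. Therefore there is a unique spine path, it runs from the $X$--$Z$ edge all the way to the $Y$--$T$ edge, and every other component is a single edge sitting in one of the four corners --- namely the edges $xy$ and $zt$ of $R_n$ and $S_n$. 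Finally, Claim~\ref{neiba} and its symmetric versions fix the exact configuration at each of the four corners of the spine (whether $a_1b_1$ is an edge, or one sees the crossing $a_1b_2$, $b_1a_2$), and the degree-$3$ configurations put aside in the first step supply the pendant vertices $b_2',b_{n-1}'$; matching these finitely many corner pictures against the definitions of $R_n$ and $S_n$ yields that $G$ is equivalent to a subgraph of one of them.

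I expect the third step to be the main obstacle, and within it the need to check that the $K_{2,2}$-interval minor extracted from a ``turn'' of a path or from a stray interior component never accidentally coincides with one of the two anchoring frame edges --- exactly the kind of degeneracy that already complicated the proof of Claim~\ref{eachdeg}. This forces a handful of small positions near the corners to be verified by hand; each verification is routine, but this bookkeeping is the only substantial part of the argument, the remainder being essentially a restatement of the claims already proved.
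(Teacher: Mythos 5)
Your first two steps are sound: isolating the prescribed degree-$3$ corner configurations via Claim~\ref{eachdeg}, and killing cycles by merging all of $C\cap A$ except its highest vertex, are correct, and since the paper itself offers no explicit synthesis of Claims~\ref{Xadj}--\ref{neiba}, this gluing is exactly what has to be supplied. The genuine gap is in your third step: the global structure you assert there is false. A reduced $K_{2,2}$-interval-minor-free graph need not contain any path joining the $X$--$Z$ edge to the $Y$--$T$ edge, and it may have many components lying strictly between the two corners: the diagonal matching with edges $a_ib_i$ for $1\leqslant i\leqslant n$ is reduced, avoids $K_{2,2}$, and consists of $n$ single-edge components, all but two of them strictly interior (this is precisely the situation of the paper's corollary on $K_{2,2}$-free matchings); more generally, any collection of disjoint monotone pieces arranged ``in series'' along the diagonal is legal, being a subgraph of $R_n$. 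Your proposed contradiction --- contract $u$ with the staircase vertex just below it and $v$ with the staircase vertex just above it --- tacitly assumes the other component straddles the stray edge $uv$ in both coordinates; in the series configurations above no such straddling vertices exist, no $K_{2,2}$ arises, and indeed none should. So ``there is a unique spine path from the $X$--$Z$ edge to the $Y$--$T$ edge, and every other component is a single corner edge'' is not a correct intermediate statement, and no argument can establish it.

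What the third step actually needs is coordinate-wise rather than component-wise: show that, away from the two decorated corners, (i) no two edges cross, i.e.\ there are no edges $a_ib_j$, $a_{i'}b_{j'}$ with $i<i'$ and $j'<j$, and (ii) no vertex of degree $2$ (in either part) has another edge of $G$ ending strictly inside the interval spanned by its two neighbours. Each such forbidden pattern, combined with the $X$--$Z$ and $Y$--$T$ edges guaranteed by Claim~\ref{cl:XZYT}, yields four edges lying in four disjoint boxes, hence a $K_{2,2}$-interval minor, except when the pattern sits at a corner, where Claims~\ref{eachdeg} and~\ref{neiba} take over --- this is where your anticipated degenerate-position bookkeeping belongs. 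Together with maximum degree $2$, conditions (i) and (ii) make the edge set weakly monotone, and a weakly monotone edge set of maximum degree $2$ embeds into a long staircase whether or not the graph is connected; connectivity is neither available nor needed. With that replacement, and the corner matching against $R_n$ and $S_n$ that you already describe, the plan goes through.
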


A matching of size $n$ is a 1-regular bipartite graph on $2n$ vertices. The following should be clear from Theorem \ref{maink22}.

\begin{cor}
For every integer $n \ge 4$, there are exactly eight $K_{2,2}$-interval minor free matchings of size $n$. They form three different equivalence classes.
\end{cor}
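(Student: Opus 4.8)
The plan is to read the corollary off Theorem~\ref{maink22}. The first point is that every matching of size $n$ is already reduced: it has no vertex of degree $0$, and a vertex $v$ of degree $1$ is reducible only when the unique neighbour of $v$ has degree at least $2$, which never happens in a matching. Hence Theorem~\ref{maink22} applies verbatim: a matching $M$ of size $n$ is $K_{2,2}$-interval minor free if and only if it is equivalent to a subgraph of $R_m$ or of $S_m$ for some $m$. Since $M$ is a matching, such a subgraph is just a choice of $n$ vertices in each part of $R_m$ (resp.\ $S_m$), together with a perfect matching between them using edges of the host graph.

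The main work is to list these matchings. Here one uses that $R_m$ and $S_m$ are caterpillar-like: apart from the external low-degree vertices $x,y,z,t$ and the extra pendants ($b_2',b_{m-1}'$ in $R_m$; $b_2'$ in $S_m$), each of these graphs is a single zig-zag spine $b_1\,a_1\,b_2\,a_2\cdots a_{m-1}\,b_m$ in which every vertex has degree at most $2$. Once its two endpoints are chosen, a perfect matching on a balanced subset of $2n$ vertices is forced to run monotonically along the spine, and the pendant vertices at the two ends provide exactly one way to swap the two outermost edges (using $x\,y$ at the bottom and $z\,t$ at the top in $R_m$, or $x\,y$ at the bottom and the high-degree vertex $b_m$ at the top in $S_m$). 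One also has to verify that any ``interior'' swap destroys the property --- e.g.\ the matchings corresponding to the permutations $1\,3\,2\,4\cdots$ or $3\,1\,2\,4\cdots$ both contain a $K_{2,2}$-interval minor, so only the two outermost pairs may be swapped. Carrying this out, one finds that up to equivalence exactly three matchings arise: the monotone matching $M^0=\{a_ib_i:1\le i\le n\}$, the matching $M^1=\{a_1b_2,a_2b_1\}\cup\{a_ib_i:3\le i\le n\}$ obtained by swapping one outermost pair, and the matching $M^2=\{a_1b_2,a_2b_1,a_{n-1}b_n,a_nb_{n-1}\}\cup\{a_ib_i:3\le i\le n-2\}$ obtained by swapping both --- the latter making sense precisely because $n\ge4$.

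Finally I would count the members of each equivalence class, using that the only available moves are reversing the order of one part, reversing the order of both parts, and exchanging the two parts. The class of $M^0$ is $\{M^0,\overline{M^0}\}$ (reversal of one part), of size $2$. The class of $M^1$ contains the matchings obtained from $M^0$ by swapping the bottom pair, by swapping the top pair, and the reversals of these two; for $n\ge4$ these are four distinct ordered matchings, since the pairs $\{1,2\}$ and $\{n-1,n\}$ are then disjoint and one swap does not turn $M^0$ into $\overline{M^0}$. The class of the self-symmetric matching $M^2$ is again of size $2$. This yields $2+4+2=8$ matchings and $3$ equivalence classes, and the bound $n\ge4$ is exactly what keeps these eight pairwise distinct (for $n\le3$ several of them coincide). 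The step I expect to be delicate is the middle one: one must make sure no balanced sub-matching of any $R_m$ or $S_m$ is missed, and that matchings coming from different $m$, or from $R_m$ rather than $S_m$, contribute nothing beyond $M^0,M^1,M^2$.
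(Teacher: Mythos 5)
Your proposal is correct and follows exactly the route the paper intends: the paper gives no written proof, merely asserting that the corollary ``should be clear from Theorem~\ref{maink22}'', and your argument fills in that derivation --- matchings are automatically reduced, sub-matchings of $R_m$ and $S_m$ are forced to be monotone along the spine except for possible swaps of the two outermost edges at each end, and the resulting patterns split into classes of sizes $2+4+2=8$. The enumeration and the role of the hypothesis $n\ge 4$ are handled correctly, so nothing essential is missing.
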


\section{$K_{3,\ell}$ as interval minor}

For $K_{3,\ell}$-interval minors in bipartite graphs, we start in a similar manner as when excluding $K_{2,\ell}$. We first establish a simple upper bound, which will later turn out to be optimal in the case when the sizes of the two parts are not very balanced.

\begin{lemma}\label{frtbd3}
For any integers $\ell\ge1$ and $p,q\ge2$, we have
$$ex(p,q,K_{3,\ell})\leqslant (\ell-1)(p-2)+2q.$$
\end{lemma}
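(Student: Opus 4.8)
The plan is to mimic the counting argument of Lemma \ref{frtbd}, but now tracking a ``budget'' of two columns rather than one. Let $(G;A,B)$ be a $K_{3,\ell}$-interval minor free bipartite graph with $A$ ordered $a_1<\cdots<a_p$ and $B$ ordered $b_1<\cdots<b_q$. For each $1\leqslant i\leqslant p-1$ define, exactly as before,
$$A_i=\{b_j \mid \exists\ i_1\leqslant i<i_2 \text{ with } a_{i_1}b_j,\,a_{i_2}b_j\in E(G)\}.$$
The first key observation is that now $|A_i|\leqslant \ell-1$ still holds for the ``two-sided'' columns, but the relevant bound we want is about columns that are seen from \emph{three} sides; so instead I would refine $A_i$ by splitting off a bounded correction term, the way the proof of Theorem \ref{rs2} split off the smallest-index element of each $A_i$. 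Concretely, for each $b_j$ let $f(b_j)$ be the number of indices $i\in\{1,\dots,p-1\}$ with $b_j\in A_i$; the same consecutive-interval reasoning as in Lemma \ref{frtbd} shows $f(b_j)\geqslant d(b_j)-1$, with equality iff the neighbours of $b_j$ in $A$ form no ``gap'' that a third column could exploit.

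The second and main step is to convert the $K_{3,\ell}$-freeness into the statement that \emph{no $\ell$ columns are simultaneously ``triply spanned.''} Say $b_j$ is triply spanned at position $i$ if there are indices $i_1<i_2\leqslant i<i_3$ (or $i_1\leqslant i<i_2<i_3$, i.e.\ two on one side and one on the other relative to the cut after $i$) with $a_{i_1}b_j,a_{i_2}b_j,a_{i_3}b_j\in E(G)$ — equivalently, $b_j\in A_i$ and $b_j$ still has a neighbour on each side that is not ``used up'' by the $A_i$ witness. If $\ell$ distinct columns were triply spanned at a common cut, contracting $A$ into the three groups determined by that configuration and contracting $B$ into those $\ell$ columns yields a $K_{3,\ell}$ interval minor. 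The counting then runs: each $b_j$ contributes at least $d(b_j)-2$ to the total over a suitable family of $p-2$ cuts (we lose one unit at each end, analogous to how the bound degrades from $q$ to $2q-$something and from $p-1$ to $p-2$), while each of those $p-2$ cuts can be charged at most $\ell-1$ such contributions. Summing, $\sum_j (d(b_j)-2)\leqslant (\ell-1)(p-2)$ restricted to the ``interior'' columns, and the $2q$ term absorbs the two-per-column slack plus the boundary columns where the inequality is vacuous; rearranging gives $|E(G)|\leqslant(\ell-1)(p-2)+2q$.

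The step I expect to be the main obstacle is pinning down the precise bookkeeping for the endpoints — i.e.\ making rigorous the claim that it is exactly \emph{two} columns' worth and \emph{two} rows' worth of slack that must be set aside, and that the family of cuts can be taken to have size $p-2$ rather than $p-1$. This is the same kind of delicate ``$-1$ here, $+1$ there'' accounting that appears in the proof of Theorem \ref{rs2} (the terms $d(b_1)=d'(b_1)$, the shift of the summation index, etc.), and getting the constants to land on $(\ell-1)(p-2)+2q$ rather than something off by one will require carefully identifying which two columns of $A$ (the ``bottom'' pair near $a_1$ and the ``top'' pair near $a_p$) are never the outermost witnesses of a triply-spanned column. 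Everything else is the routine double-counting template already used twice in the paper.
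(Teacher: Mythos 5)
Your overall counting template (charge each column's excess degree to cuts, bound each cut's charge by $\ell-1$) is the right shape, but the central claim in your second step is false as stated, and it is exactly the point where your argument differs from what is needed. You say: if $\ell$ distinct columns are ``triply spanned'' at a common cut after $a_i$ (each having two neighbours on one side of the cut and one on the other, with the side and the inner split allowed to vary from column to column), then contracting gives a $K_{3,\ell}$ interval minor. This does not follow, because a $K_{3,\ell}$ interval minor requires \emph{one} partition of $A$ into three consecutive intervals that works simultaneously for all $\ell$ columns, whereas your definition only fixes one of the two breakpoints. Concretely, take $p=4$, $q=2$, with $b_1$ adjacent to $a_1,a_2,a_3$ and $b_2$ adjacent to $a_2,a_3,a_4$: both columns are triply spanned at the cut after $a_2$, yet none of the three partitions $(a_1\mid a_2\mid a_3a_4)$, $(a_1\mid a_2a_3\mid a_4)$, $(a_1a_2\mid a_3\mid a_4)$ has a neighbour of both columns in every block, so there is no $K_{3,2}$ interval minor. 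Hence your per-cut bound of $\ell-1$ is unjustified, and the subsequent ``delicate endpoint bookkeeping'' you flag as the main obstacle cannot rescue it.

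The repair is to anchor the middle witness at a specific row rather than at a cut, which is what the paper does: for $2\leqslant i\leqslant p-1$ set
$A_i=\{b_j \mid a_ib_j\in E(G) \text{ and } \exists\, i_1<i<i_2 \text{ with } a_{i_1}b_j,\,a_{i_2}b_j\in E(G)\}$,
i.e.\ $b_j\in A_i$ exactly when $a_i$ is a \emph{middle} neighbour of $b_j$. Now the single partition $\{a_1,\dots,a_{i-1}\}$, $\{a_i\}$, $\{a_{i+1},\dots,a_p\}$ is common to all columns of $A_i$, so $K_{3,\ell}$-freeness gives $|A_i|\leqslant\ell-1$ directly. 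Moreover each $b_j$ with $d(b_j)\geqslant 2$ lies in exactly $d(b_j)-2$ of the sets $A_2,\dots,A_{p-1}$ (all its neighbours except the first and last), so $\sum_j\bigl(d(b_j)-2\bigr)\leqslant(\ell-1)(p-2)$ and $|E(G)|\leqslant(\ell-1)(p-2)+2q$ falls out immediately; the $p-2$ and the $2q$ appear automatically, with none of the endpoint gymnastics you anticipated. Your first paragraph's idea of splitting off a correction term as in Theorem \ref{rs2} is likewise unnecessary for this lemma.
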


\begin{proof}
Let $(G;A,B)$ be a bipartite graph with parts of sizes $p$ and $q$. Suppose that $A$ has ordering $a_1 < a_2 < \dots <a_p$ and $B$ has ordering $b_1 < b_2 <\dots <b_q$.
For $2\leqslant i\leqslant p-1$, let
$$A_i=\{b_j \mid a_ib_j\in E(G), \exists \ i_1 < i < i_2 \text{ such that } a_{i_1}b_j,a_{i_2}b_j\in E(G)\}.$$
If $G$ is $K_{3,\ell}$-interval minor free, we have $|A_i|\leqslant \ell-1$.
Each $b_j\in B$ of degree at least 2 appears in precisely
$d(b_j)-2$ of the sets $A_i$, $2\leqslant i\leqslant p-1$. It follows that
\begin{equation*}
\sum_{j=1}^q (d(b_j)-2) \leqslant \sum_{i=2}^{p-1} |A_i|.
\end{equation*}
This gives
$|E(G)|\leqslant(\ell-1)(p-2)+2q$, as desired.
\end{proof}

Let $(G;A,B)$ and $(G'; A',B')$ be disjoint ordered bipartite graphs. Let $a_{p-1},a_p$ be the last two vertices in the linear order in $A$ and let $b_{q-1},b_q$ be the last two vertices in $B$. Denote by $a_1',a_2'$ and $b_1',b_2'$ the first two vertices in $A'$ and $B'$, respectively. Let us denote by $G\oplus_2 G'$ the ordered bipartite graph obtained from $G$ and $G'$ by identifying $a_{p-1}$ with $a_1'$, $a_p$ with $a_2'$, $b_{q-1}$ with $b_1'$, and $b_q$ with $b_2'$. The resulting ordered bipartite graph $G\oplus_2 G'$ is called the \emph{$2$-concatenation} of $G$ and $G'$.
We have a similar observation as used earlier for $K_{2,\ell}$-free graphs.
If $a_{p-1},a_p$ and $b_{q-1},b_q$ form $K_{2,2}$ in $G$ and $a_1',a_2'$ and $b_1',b_2'$ form $K_{2,2}$ in $G'$, and $r\ge3$ and $s\ge3$, then $G\oplus_2 G'$ is $K_{r,s}$-interval minor free if and only if $G$ and $G'$ are both $K_{r,s}$-interval minor free.

\begin{example} \label{ex3}
Let $\ell\geqslant 4$,
$p = (\ell-3)r+e$ and $q = (\ell-3)s+f$ where $2 \leqslant e \leqslant \ell-2$, $2 \leqslant f \leqslant \ell-2$ and $r<s$.
Let $\mathcal{K}_{p,q}(\ell)$ be the 2-concatenation of $K_{e,\ell-1}$, $r$ copies of $K_{\ell-1,\ell-1}$ and $K_{2,q-(\ell-3)(r+1)}$. This graph has parts of sizes $p$ and $q$ and has $(\ell-1)(p-2)+2q$ edges.
\end{example}

By Lemma \ref{frtbd3} and Example \ref{ex3}, the following is clear.

\begin{thm}
Let $\ell\geqslant 4$,
$p = (\ell-3)r+e$ and $q = (\ell-3)s+f$ where $2 \leqslant e \leqslant \ell-2$, $2 \leqslant f \leqslant \ell-2$ and $r<s$. Then
$$ex(p,q,K_{3,\ell})=(\ell-1)(p-2)+2q.$$
\end{thm}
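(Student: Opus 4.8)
The plan is straightforward: the theorem asserts equality, and one direction is already in hand. The lower bound $ex(p,q,K_{3,\ell})\geqslant (\ell-1)(p-2)+2q$ follows immediately from Example \ref{ex3}, provided one checks that $\mathcal{K}_{p,q}(\ell)$ is genuinely $K_{3,\ell}$-interval minor free and has the claimed edge count. The $K_{3,\ell}$-interval minor freeness is a consequence of the $2$-concatenation observation stated just before the example: each building block $K_{e,\ell-1}$, $K_{\ell-1,\ell-1}$, and $K_{2,q-(\ell-3)(r+1)}$ is $K_{3,\ell}$-interval minor free (the first two because they have at most $\ell-1$ vertices on the relevant side, the last because it has only $2$ vertices on one side), and the last two vertices of each part of each block span a $K_{2,2}$, so the hypothesis of the observation applies at every gluing. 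The edge count is a direct arithmetic check: $r$ copies of $K_{\ell-1,\ell-1}$ contribute $r(\ell-1)^2$ edges, $K_{e,\ell-1}$ contributes $e(\ell-1)$, and $K_{2,q-(\ell-3)(r+1)}$ contributes $2(q-(\ell-3)(r+1))$, but one must subtract the $4$ edges double-counted at each of the $r+1$ identifications; collecting terms and substituting $p=(\ell-3)r+e$ should yield exactly $(\ell-1)(p-2)+2q$.

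For the upper bound, I would simply invoke Lemma \ref{frtbd3}, which already gives $ex(p,q,K_{3,\ell})\leqslant (\ell-1)(p-2)+2q$ for all $p,q\geqslant 2$ and $\ell\geqslant 1$, with no hypothesis on $r$ versus $s$. So no further work is needed on that side; the role of the assumption $r<s$ is purely to guarantee that the matching construction in Example \ref{ex3} actually fits, i.e.\ that $q-(\ell-3)(r+1)\geqslant 2$ so that $K_{2,q-(\ell-3)(r+1)}$ is a legitimate block with at least the required two vertices on the $B$-side. Concretely, $q-(\ell-3)(r+1) = (\ell-3)s+f-(\ell-3)(r+1) = (\ell-3)(s-r-1)+f$, and since $r<s$ we have $s-r-1\geqslant 0$ and $f\geqslant 2$, so this quantity is at least $2$; this is the place where $r<s$ is genuinely used.

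Putting the two bounds together gives the equality, exactly as the sentence ``By Lemma \ref{frtbd3} and Example \ref{ex3}, the following is clear'' promises. The main obstacle, such as it is, is entirely bookkeeping: making sure the edge count of the $2$-concatenation is computed with the correct inclusion–exclusion at the glued $K_{2,2}$'s, and making sure the parts of $\mathcal{K}_{p,q}(\ell)$ really have sizes $p$ and $q$ after the identifications (the $A$-side has size $e + r(\ell-1-2) + \dots$ with $\ell-3$ new vertices added per copy, matching $p=(\ell-3)r+e$; similarly for $B$). There is no deep structural argument required here — unlike the $r=s$ case treated in Theorem \ref{rs2}, the unbalanced case is a clean matching of the generic upper bound with an explicit extremal family.
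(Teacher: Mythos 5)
Your proposal is correct and matches the paper's proof, which likewise obtains the upper bound from Lemma \ref{frtbd3} and the lower bound from the construction $\mathcal{K}_{p,q}(\ell)$ of Example \ref{ex3} (the paper simply states ``By Lemma \ref{frtbd3} and Example \ref{ex3}, the following is clear''). Your verification of the edge count, the part sizes, the $K_{3,\ell}$-interval minor freeness via the $2$-concatenation observation, and the role of $r<s$ in guaranteeing $q-(\ell-3)(r+1)\geqslant 2$ is exactly the bookkeeping the paper leaves to the reader.
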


We now consider the remaining cases, where both parts are ``almost balanced'', i.e., $\lfloor (p-2)/(\ell-3) \rfloor = \lfloor (q-2)/(\ell-3) \rfloor$.

\begin{example} \label{ex4}
Let $\ell\geqslant 4$,
$p = (\ell-3)r+e$ and $q = (\ell-3)r+f$ where $2 \leqslant e \leqslant \ell-2$ and $2 \leqslant f \leqslant \ell-2$.
Let $\mathcal{K}_{p,q}(\ell)$ be the 2-concatenation of $K_{e,f}$ and $r$ copies of $K_{\ell-1,\ell-1}$. This graph is $K_{3,\ell}$-interval minor free, has parts of sizes $p$ and $q$, and has $r(\ell-3)(\ell+1)+ef$ edges.
It follows that
$$ex(p,q,K_{3,\ell}) \geqslant r(\ell-3)(\ell+1)+ef.$$
We conjecture that this is in fact the exact value for $ex(p,q,K_{3,\ell})$.
Unfortunately, we have not been able to adopt the proof of Theorem \ref{rs2} for this case.
\end{example}

\end{document}